\documentclass{amsart}

\usepackage{color}
 \newtheorem{thm}{Theorem}
 
 \newtheorem{lemma}{Lemma}
 \newtheorem{prop}{Proposition}
 \theoremstyle{definition}
 \newtheorem{defn}{Definition}
 \theoremstyle{remark}
 \newtheorem{rem}{Remark}

\begin{document}

\title[Carath\'{e}odory theorems for Slice Regular Functions]
{Carath\'{e}odory theorems for Slice Regular Functions}

\author[G. B. Ren]{Guangbin Ren}
\address{Guangbin Ren, Department of Mathematics, University of Science and
Technology of China, Hefei 230026, China}

\email{rengb$\symbol{64}$ustc.edu.cn}

\author[X. P. Wang]{Xieping Wang}
\address{Xieping Wang, Department of Mathematics, University of Science and
Technology of China, Hefei 230026,China}

\email{pwx$\symbol{64}$mail.ustc.edu.cn}

\thanks{This work was supported by the NNSF  of China (11071230), RFDP (20123402110068).}

\keywords{Quaternion, Slice Regular Functions, Carath\'{e}odory Theorems.}
\subjclass[2010]{30G35. 32A26}

\begin{abstract}
In this paper a quaternionic sharp version of the Carath\'{e}odory  theorem is established for slice regular functions with positive real part, which strengthes a weaken version recently established  by D. Alpay et. al.  using the  Herglotz integral formula. Moreover, the restriction of positive real part can be relaxed so that the theorem becomes  the quaternionic version of the Borel-Carath\'{e}odory  theorem. It turns out that  the two theorems are equivalent.
\end{abstract}

\maketitle

\section{Introduction}
The celebrated  Carath\'{e}odory theorem for holomorphic functions with positive real part  plays an important role in the geometric function theory of one complex variable (see \cite{C,Du,GG}):

\begin{thm}\label{CT1}
Let $f:\mathbb D\rightarrow \mathbb C$ be a holomorphic function such that $f(z)=1+\sum\limits_{n=1}^\infty a_nz^n$ and ${\rm{Re}}f(z)>0$, then for all $z\in\mathbb D$,
\begin{eqnarray}\label{eq:1}
\frac{1-|z|}{1+|z|}\leq {\rm{Re}}f(z)\leq |f(z)|\leq \frac{1+|z|}{1-|z|};
\end{eqnarray}
 and
 \begin{eqnarray}\label{eq:11an}|a_n|\leq 2,\qquad \forall \ n\in\mathbb N.
 \end{eqnarray}
Moreover, equality holds for the first or the third inequality in $(\ref{eq:1})$ at some point $z_0\neq 0$ if and only if $f$ is of the form $$f(z)=\frac{1+e^{i\theta} z}{1-e^{i\theta} z}\ ,\qquad \forall\,\, z\in \mathbb C,$$ for some $ \theta \in \mathbb R$.

\end{thm}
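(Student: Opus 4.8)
The plan is to reduce the entire statement to the Schwarz lemma by means of the Cayley transform. Since ${\rm Re}\,f>0$ on $\mathbb D$ we have $f(z)+1\neq 0$ there, so we may set $g=\frac{f-1}{f+1}$; the identity $|f-1|^2-|f+1|^2=-4\,{\rm Re}\,f<0$ shows that $g$ maps $\mathbb D$ holomorphically into $\mathbb D$, and $g(0)=0$ because $f(0)=1$. The Schwarz lemma then gives $|g(z)|\le|z|$ for all $z$, with equality at a single point $z_0\neq0$ forcing $g(z)\equiv e^{i\theta}z$ for some $\theta\in\mathbb R$; inverting, $f=\frac{1+g}{1-g}$.

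For the inequalities in $(\ref{eq:1})$: the middle one is the trivial ${\rm Re}\,w\le|w|$. Fix $z$, write $r=|z|$ and $w=g(z)$, so $|w|\le r$. From $f=\frac{1+w}{1-w}$ a short computation gives ${\rm Re}\,f=\frac{1-|w|^2}{|1-w|^2}$, which as a function of $w$ is harmonic on $\overline{\mathbb D(0,r)}$ and hence takes its extrema on $|w|=r$, where $|1-w|$ ranges over $[1-r,1+r]$; this yields $\frac{1-r}{1+r}\le{\rm Re}\,f(z)\le\frac{1+r}{1-r}$, in particular the asserted lower bound. For the upper bound on $|f|$, estimate $|f(z)|=\frac{|1+w|}{|1-w|}\le\frac{1+|w|}{1-|w|}\le\frac{1+r}{1-r}$, using the triangle inequality together with the monotonicity of $t\mapsto\frac{1+t}{1-t}$ on $[0,1)$.

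For the coefficient bound $(\ref{eq:11an})$ I would work directly with ${\rm Re}\,f$ rather than with $g$: for $0<\rho<1$ and $n\ge1$, extracting the $n$-th Fourier coefficient gives $\frac12 a_n\rho^n=\frac1{2\pi}\int_0^{2\pi}{\rm Re}\,f(\rho e^{i\theta})\,e^{-in\theta}\,d\theta$, so that $|a_n|\rho^n\le\frac1\pi\int_0^{2\pi}{\rm Re}\,f(\rho e^{i\theta})\,d\theta=2\,{\rm Re}\,f(0)=2$ by positivity of ${\rm Re}\,f$ and the mean value property; letting $\rho\to1$ gives $|a_n|\le2$.

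It remains to treat the equality cases, which is the part requiring the most care. If ${\rm Re}\,f(z_0)=\frac{1-|z_0|}{1+|z_0|}$ at some $z_0\neq0$, then since ${\rm Re}\,f(z_0)\ge\frac{1-|g(z_0)|}{1+|g(z_0)|}$ and $t\mapsto\frac{1-t}{1+t}$ is strictly decreasing, we must have $|g(z_0)|=|z_0|$; the rigidity case of the Schwarz lemma then forces $g(z)=e^{i\theta}z$, i.e.\ $f(z)=\frac{1+e^{i\theta}z}{1-e^{i\theta}z}$. The same conclusion follows if $|f(z_0)|=\frac{1+|z_0|}{1-|z_0|}$, because the chain $\frac{|1+g(z_0)|}{|1-g(z_0)|}\le\frac{1+|g(z_0)|}{1-|g(z_0)|}\le\frac{1+|z_0|}{1-|z_0|}$ again collapses only when $|g(z_0)|=|z_0|$. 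Conversely, for $f$ of the stated form one checks that choosing $z_0$ with $e^{i\theta}z_0$ negative (resp.\ positive) real turns the first (resp.\ third) inequality into an equality, which completes the ``if and only if''. The only genuinely delicate point throughout is ensuring that the strictness of these one-variable auxiliary maps is exactly what transfers the Schwarz rigidity back to $f$.
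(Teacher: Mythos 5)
Your proof is correct. The paper states Theorem~\ref{CT1} as a classical result and does not reprove it, but its proof of the quaternionic analogue (Theorem~\ref{CT3}) shows exactly which argument the authors have in mind, and for the growth estimates and the equality discussion your argument coincides with theirs: the Cayley transform $g=(f-1)(f+1)^{-1}$, the Schwarz lemma $|g(z)|\le |z|$, the identity $\mathrm{Re}\,f=(1-|g|^2)/|1-g|^2$, and Schwarz rigidity to pin down the extremal $f$. (Your detour through harmonicity of $w\mapsto(1-|w|^2)/|1-w|^2$ to get the lower bound is valid but unnecessary; the direct estimate $(1-|w|^2)/|1-w|^2\ge(1-|w|)/(1+|w|)$, which is what the paper uses in its analogue of this step, already suffices.) Where you genuinely diverge is the coefficient bound $|a_n|\le 2$: you use Carath\'eodory's original argument, extracting the $n$-th Fourier coefficient of the positive function $\mathrm{Re}\,f$ on circles $|z|=\rho$ and letting $\rho\to 1$. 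The paper instead obtains $|a_1|\le 2$ from $f'(0)=2g'(0)$ together with $|g'(0)|\le 1$, and then reduces $|a_{n_0}|\le 2$ to the case $n_0=1$ by averaging $f$ over the rotations $z\mapsto z\omega^k$, $\omega$ a primitive $n_0$-th root of unity, which produces a new function $1+\sum_m z^m a_{mn_0}$ again with positive real part and with first coefficient $a_{n_0}$. Both methods are complete in the complex setting; the trade-off, which the paper makes explicit in its introduction, is that the Fourier/Herglotz method degrades to $|\mathrm{Re}(a_n)|\le 2$ over the quaternions, whereas the finite-averaging argument survives, so your route for the coefficient bound would not carry over to Theorem~\ref{CT3} while the rest of your proof would.
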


A well-known Borel-Carath\'{e}odory theorem is a variant of the Carath\'{e}odory theorem relaxing  the restriction of positive real part (For  a Clifford-analytic version, see \cite{GMC}):

\begin{thm}\label{CT2}
Let $f(z)=\sum\limits_{n=0}^{\infty}a_nz^n$ be a holomorphic function on $\mathbb D$, continuous up to the boundary $\partial \mathbb D$. Set $A=\max\limits_{|z|=1}{\rm{Re}}f(z)$, then
\begin{equation}\label{111}
  |a_n|\leq 2(A-{\rm{Re}}f(0)),\qquad\forall\,\,n\in\mathbb N;
\end{equation}
\begin{equation}\label{112}
 |f(z)-f(0)|\leq \frac{2r}{1-r}(A-{\rm{Re}}f(0)),\qquad \forall\,\,|z|\leq r<1.
 \end{equation}
\end{thm}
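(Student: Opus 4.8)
The plan is to reduce the statement to the normalized Carath\'eodory situation of Theorem \ref{CT1}. First I would strip off the imaginary part of $f(0)$: replacing $f$ by $f-i\,{\rm Im}\,f(0)$ changes neither ${\rm Re}\,f$ (so neither $A$), nor $a_n$ for $n\geq 1$, nor the difference $f(z)-f(0)$, so there is no loss in assuming $f(0)\in\mathbb R$. Since ${\rm Re}\,f$ is continuous on $\overline{\mathbb D}$ and harmonic in $\mathbb D$, the maximum principle gives ${\rm Re}\,f\leq A$ throughout $\mathbb D$, and the mean value property gives ${\rm Re}\,f(0)\leq A$. If $A=f(0)$, then ${\rm Re}\,f$ attains its maximum at the interior point $0$, hence $f$ is constant and $(\ref{111})$, $(\ref{112})$ hold trivially; so I may assume $A>f(0)$ and set
\[
g(z)=\frac{A-f(z)}{A-f(0)},\qquad z\in\mathbb D.
\]

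Then $g$ is holomorphic on $\mathbb D$ with $g(0)=1$, and writing $g(z)=1+\sum_{n=1}^{\infty}b_nz^n$ one has $b_n=-a_n/(A-f(0))$ for every $n\geq 1$. Moreover ${\rm Re}\,g=(A-{\rm Re}\,f)/(A-f(0))\geq 0$ on $\mathbb D$, and ${\rm Re}\,g$ cannot vanish at an interior point, since otherwise it would attain its minimum over $\overline{\mathbb D}$ there and be identically zero, contradicting ${\rm Re}\,g(0)=1$. Thus $g$ meets the hypotheses of Theorem \ref{CT1}, whose coefficient estimate $(\ref{eq:11an})$, applied to $g$, gives $|b_n|\leq 2$, that is,
\[
|a_n|=(A-f(0))\,|b_n|\leq 2\big(A-{\rm Re}\,f(0)\big),\qquad \forall\,n\in\mathbb N,
\]
which is $(\ref{111})$.

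For the growth bound $(\ref{112})$ the inequalities $(\ref{eq:1})$ are not quite enough on their own: applied to $g$ they control $|g(z)|$ and ${\rm Re}\,g(z)$, but the crude combination $|g(z)-1|\leq |g(z)|+1$ only yields $2/(1-|z|)$ rather than the sharp $2|z|/(1-|z|)$. Instead I would use the Cayley transform. The function $h(z)=\big(g(z)-1\big)/\big(g(z)+1\big)$ maps $\mathbb D$ into $\mathbb D$ (because ${\rm Re}\,g>0$) and satisfies $h(0)=0$, so the Schwarz lemma gives $|h(z)|\leq |z|$; inverting, $g(z)-1=2h(z)/(1-h(z))$, whence $|g(z)-1|\leq 2|z|/(1-|z|)\leq 2r/(1-r)$ for $|z|\leq r<1$. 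Since $f(z)-f(0)=-(A-f(0))\big(g(z)-1\big)$, this gives $|f(z)-f(0)|\leq \frac{2r}{1-r}\big(A-{\rm Re}\,f(0)\big)$, which is $(\ref{112})$.

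I expect the main obstacle to be not any single computation but the bookkeeping around the hypotheses: using the boundary continuity to make $A$ finite and the maximum/minimum principle available, correctly disposing of the degenerate case $A={\rm Re}\,f(0)$, and — the subtlest point — recognizing that the displayed inequalities of Theorem \ref{CT1} stop just short of the sharp $(\ref{112})$, so that the extra Schwarz-lemma input (equivalently, the Herglotz representation of $g$) is genuinely needed. Note also that the substitution $g\leftrightarrow f$ is exactly the bridge tying Theorem \ref{CT2} to Theorem \ref{CT1}, which is the source of the equivalence mentioned in the abstract.
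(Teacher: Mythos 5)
Your proof is correct. The paper states Theorem \ref{CT2} as classical background and never proves it directly, but your argument is essentially the same reduction the paper itself uses when it derives Theorem \ref{CT4} from Theorem \ref{CT3}: after making $f(0)$ real, pass to $g=(A-f)/(A-{\rm Re}f(0))$, check ${\rm Re}\,g>0$ via the minimum principle, and read off the coefficient bound from the Carath\'eodory theorem; your Cayley-transform step for $(\ref{112})$ likewise mirrors the M\"obius-plus-Schwarz argument in the paper's direct proof of Theorem \ref{CT4}. The only (harmless) divergence is that the paper gets the growth bound $(\ref{112})$ more cheaply from the coefficient bound $(\ref{111})$ by summing the geometric series $\sum_{n\ge1}|a_n|r^n\le 2(A-{\rm Re}f(0))\sum_{n\ge1}r^n$, so the extra Schwarz-lemma input you flag as ``genuinely needed'' is in fact avoidable.
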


 It is quite natural to extend the results to higher dimensions or to other function classes.
However, a great challenge may  arise from  extensions to  non-holomorphic function classes, because of the  failure  of closeness  under multiplication and composition.

The purpose of this article is to generalize the above two theorems to the setting of quaternions for slice regular functions. The theory of slice regular functions is initiated recently by Gentili and Struppa \cite{GS1,GS2}. The detailed up-to-date theory appears in the monographs \cite{GSS, Co2}.
 In particular, the slice regular product was introduced in the setting of quaternions in \cite{GSS1} for slice regular power series and in \cite{CGSS} for slice regular functions on symmetric slice domains.
 The geometric theory of slice regular functions of one quaternionic variable
has been studied in \cite{GS,CGS1,CGS2}.
 Recently,  the authors \cite{RW} establish the growth  and distortion theorems for slice regular extensions of normalized univalent holomorphic functions with the help of a so-called convex combination identity.

Our main results in this article  are the  quaternionic versions of  the Carath\'{e}odory  theorem as well as  the  Borel-Carath\'{e}odory theorem  for slice regular functions:

\begin{thm}\label{CT3}
Let $\mathbb B$ the open  unit ball in the quaternions $\mathbb H$.
If $f:\mathbb B\rightarrow \mathbb H$ is a regular function for which  $f(q)=1+\sum\limits_{n=1}^\infty q^n a_n$ and ${\rm{Re}}f(q)>0$, then
\begin{eqnarray}\label{eq:11}
\frac{1-|q|}{1+|q|}\leq  {\rm{Re}}f(q)\leq |f(q)|\leq \frac{1+|q|}{1-|q|}, \qquad\forall\ q\in\mathbb B,
\end{eqnarray}
 and \begin{eqnarray}\label{eq:11an}|a_n|\leq 2,\qquad \forall \ n\in\mathbb N.  \end{eqnarray}
Moreover, equality holds for the first or the third inequality in $(\ref{eq:11})$ at some point $q_0\neq 0$ if and only if $f$ is of the form $$f(q)=(1-q e^{I\theta})^{-\ast}\ast(1+qe^{I\theta}),\qquad \forall\,\, q\in \mathbb B,$$ for some  $\theta \in \mathbb R$ and some $I\in \mathbb H$
 with $I^2=-1$.

Equality holds in $(\ref{eq:11an})$  for some  $n_0\in \mathbb N$, i.e. $|a_{n_0}|=2$ if and only if
 $$a_{kn_0}=2\Big(
\dfrac{a_{n_0}}{2}\Big)^{k}, \qquad \forall\  k\in\mathbb N.
$$
In particular,
 $|a_1|=2$ if and only if
$$f(q)=(1-q e^{I\theta_0})^{-\ast}\ast(1+qe^{I\theta_0}),\qquad \forall\,\, q\in \mathbb B,$$ for some  $\theta_0 \in \mathbb R$ and some $I\in \mathbb H$
 with $I^2=-1$.
\end{thm}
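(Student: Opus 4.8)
The plan is to route everything through a single auxiliary function, the $\ast$-Cayley transform of $f$. Since $\mathrm{Re}\,f>0$ and $f(0)=1$, the slice regular function $f+1$ never vanishes, hence its symmetrization is zero-free on $\mathbb B$, so
$$g:=(f+1)^{-\ast}\ast(f-1)=1-2(f+1)^{-\ast}$$
is well-defined and slice regular on $\mathbb B$ with $g(0)=0$. The first claim to establish is $g(\mathbb B)\subseteq\mathbb B$. Using the pointwise formula $h^{-\ast}(q)=h(\tilde q)^{-1}$ with $\tilde q$ lying on the $2$-sphere $\mathbb S_q$ through $q$, one computes $\mathrm{Re}\big((f+1)^{-\ast}(q)^{-1}\big)=1+\mathrm{Re}\,f(\tilde q)>1$, which is exactly the inequality $|1-2(f+1)^{-\ast}(q)|<1$. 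The quaternionic Schwarz lemma applied to $g$ then gives $|g(q)|\le|q|$ on $\mathbb B$, together with $|g_1|\le1$ for the first Taylor coefficient $g_1$ of $g$.

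Inverting, $f=(1-g)^{-\ast}\ast(1+g)$, and the inequalities $(\ref{eq:11})$ follow. The value formula $(h_1\ast h_2)(q)=h_1(q)\,h_2\big(h_1(q)^{-1}q\,h_1(q)\big)$ together with the pointwise $\ast$-inverse formula gives $|f(q)|=|1-g(q'')|^{-1}\,|1+g(q')|$ for suitable $q',q''\in\mathbb S_q$, whence $|f(q)|\le\frac{1+|q|}{1-|q|}$ because $|g|\le|q|$; the middle inequality is trivial; and $\mathrm{Re}\,f(q)=2\,\mathrm{Re}\big((1-g)^{-\ast}(q)\big)-1\ge\frac{1-|q|}{1+|q|}$ follows from $|g|\le|q|$ by a short estimate (alternatively, restrict to the slice $\mathbb C_I\ni q$, where $\mathrm{Re}\,f$ equals the real part of the holomorphic component $F$ of $f_I$ and the classical Carath\'eodory theorem applies directly). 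For $(\ref{eq:11an})$ I pass to a slice in a different way: given $n$, choose $I\in\mathbb S$ with $a_n\in\mathbb C_I$; writing $f_I=F+GJ$ by the Splitting Lemma, $\mathrm{Re}\,F>0$, $F(0)=1$ and $[z^n]F=a_n$, so the classical coefficient bound $|[z^n]F|\le2$ gives $|a_n|\le2$.

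For the rigidity statements I track when these inequalities are saturated. If equality holds in the first or third inequality of $(\ref{eq:11})$ at some $q_0\ne0$, tracing back through the estimates forces $|g(q'')|=|q''|$ at some nonzero $q''\in\mathbb S_{q_0}$; by the maximum modulus principle for slice regular functions — equivalently the rigidity half of the Schwarz lemma — the function $\tilde g$ in $g=q\ast\tilde g$ must be a unimodular constant $u$, so $g(q)=qu$ and $f=(1-qu)^{-\ast}\ast(1+qu)=(1-qe^{I\theta})^{-\ast}\ast(1+qe^{I\theta})$. Conversely this $f$ takes the real values $\tfrac{1\pm t}{1\mp t}$ at $q_0=\pm t\,e^{-I\theta}$, $0<t<1$, saturating the third, resp.\ first, inequality. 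For $(\ref{eq:11an})$ with $n_0=1$: $a_1=2g_1$, so $|a_1|=2$ means $|g_1|=|\tilde g(0)|=1$, and again the maximum modulus principle forces $g(q)=qe^{I\theta_0}$, i.e.\ $f=(1-qe^{I\theta_0})^{-\ast}\ast(1+qe^{I\theta_0})$, whose coefficients are $a_n=2e^{In\theta_0}=2(a_1/2)^n$. For general $n_0$, work on the slice $\mathbb C_I$ with $a_{n_0}\in\mathbb C_I$: equality $|a_{n_0}|=2=|[z^{n_0}]F|$ is equality in the classical Carath\'eodory coefficient estimate, so by the Herglotz--Riesz representation of the positive harmonic function $\mathrm{Re}\,F$ the representing measure of $F$ is concentrated on the $n_0$-element set $\{t:e^{-In_0t}=a_{n_0}/2\}$; taking moments gives $[z^{kn_0}]F=2(a_{n_0}/2)^k$, and it remains to show this exhausts $a_{kn_0}$, i.e.\ that $[z^{kn_0}]G=0$.

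The last point is where I expect the main difficulty: controlling the ``$J$-component'' $G$ of $f$ along a slice from positivity of $\mathrm{Re}\,f$ \emph{off} that slice. For the plain inequalities this is taken care of automatically by the $\ast$-product value formulas; for the sharp coefficient rigidity one must feed in the Representation Formula, which turns ``$\mathrm{Re}\,f>0$ on all of $\mathbb B$'' into a pointwise inequality of the shape $4\,\mathrm{Re}\,F(z)\,\mathrm{Re}\,F(\bar z)\ge|G(\bar z)-G(z)|^2$ on $\mathbb C_I\cap\mathbb B$, and then exploit that $\mathrm{Re}\,F$ has vanishing boundary values away from the finitely many mass points. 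A cleaner alternative — at the cost of importing the quaternionic Herglotz representation of $f$ itself (cf.\ Alpay et al.) — is to write $a_n$ directly as a quaternion-valued moment and read off $|a_{n_0}|=2$ as equality in the triangle inequality for that integral.
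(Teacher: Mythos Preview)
Your treatment of the growth estimates and their equality cases is essentially the paper's: the same $\ast$-Cayley transform $g=(f+1)^{-\ast}\ast(f-1)$, the quaternionic Schwarz lemma, and Proposition~\ref{prop:Quotient Relation} to convert $\ast$-quotients back to pointwise ones. (Minor slip: in your formula for $|f(q)|$ the two transformed points $q'$ and $q''$ are the same point $T_{1-g}(q)$, not two different ones.)

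For the coefficient bound $|a_n|\le2$ you take a genuinely different route. The paper uses a \emph{finite-average} construction: for fixed $n_0$ it sets $\varphi(q)=1+\sum_{m\ge1}q^{m} a_{mn_0}$, shows---by averaging $f_I$ over the $n_0$-th roots of unity in each slice $\mathbb C_I$---that $\mathrm{Re}\,\varphi>0$ on $\mathbb B$, and then applies the already-proved case $n_0=1$ to $\varphi$. Your idea---choose $I\in\mathbb S$ with $a_n\in\mathbb C_I$, split $f_I=F+GJ$, observe $\mathrm{Re}\,F=\mathrm{Re}\,f_I>0$, $F(0)=1$ and $[z^n]F=a_n$, and invoke the classical Carath\'eodory bound on $F$---is correct and shorter for the inequality itself. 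The paper in fact remarks that the Splitting Lemma \emph{only} yields $|\mathrm{Re}\,a_n|\le2$; your observation that the slice can be chosen adapted to each $a_n$ separately is precisely what upgrades this to the sharp bound.

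The gap is exactly where you locate it: the rigidity statement $a_{kn_0}=2(a_{n_0}/2)^{k}$ for $n_0\ge2$. Your slice argument, in the equality case, pins down only the $\mathbb C_I$-component $[z^{kn_0}]F$; showing $[z^{kn_0}]G=0$ is not settled by either of your two sketches. The Representation-Formula inequality you write down would need a full derivation plus a boundary argument you have not supplied, and the quaternionic Herglotz route does not work---as the paper explicitly notes, the second measure $d\mu_2$ in that representation is signed, so one cannot even recover $|a_n|\le2$ from it, let alone its equality case. By contrast, the paper's averaging construction closes the gap with no extra work: once $|a_{n_0}|=|\varphi'(0)|=2$, the $n_0=1$ rigidity applied to $\varphi$ forces $\varphi(q)=(1-qe^{I\theta_0})^{-\ast}\ast(1+qe^{I\theta_0})$, and reading off Taylor coefficients of $\varphi$ gives $a_{kn_0}=2e^{Ikn_0\theta_0}=2(a_{n_0}/2)^{k}$ immediately. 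This averaging step is the idea you are missing.
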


\begin{thm}\label{CT4}
Let $f(q)=\sum\limits_{n=0}^{\infty}q^n a_n$ be a slice regular function on $\mathbb B$. If $A:=\sup\limits_{q\in \mathbb B}{\rm{Re}}f(q)<+\infty$, then
\begin{equation}\label{114}
  |a_n|\leq 2\big(A-{\rm{Re}}f(0)\big),\qquad\forall\,\,n\in\mathbb N;
\end{equation}

\begin{equation}\label{115}
 |f(q)-f(0)|\leq \frac{2r}{1-r}\big(A-{\rm{Re}}f(0)\big),\qquad \forall\,\,|q|\leq r<1;
 \end{equation}
 \begin{equation}\label{155}
{\rm{Re}}f(q)\leq \frac{2r}{1+r}A+\frac{1-r}{1+r}{\rm{Re}}f(0),\qquad \forall\,\,|q|\leq r<1;
\end{equation}
 \begin{equation}\label{116}
  |f^{(n)}(q)|\leq\frac{2n!}{(1-r)^{n+1}}\big(A-{\rm{Re}}f(0)\big),\qquad \forall\,\,|q|\leq r<1,\,\,n\in\mathbb N.
 \end{equation}
\end{thm}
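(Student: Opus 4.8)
The plan is to deduce Theorem~\ref{CT4} from the quaternionic Carath\'eodory Theorem~\ref{CT3}; this is exactly the equivalence advertised in the abstract. The point is that all four estimates collapse onto the single coefficient bound \eqref{114}: once \eqref{114} is known, \eqref{115} and \eqref{116} come out of crude term-by-term estimates on the defining power series, while \eqref{155} falls out of the lower bound for the real part supplied by Theorem~\ref{CT3}.

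To handle \eqref{114} and \eqref{155} simultaneously, and to absorb the degenerate case $A={\rm Re}f(0)$ painlessly, I would build in an extra parameter. Fix $\varepsilon>0$ and set $B_\varepsilon:=A+\varepsilon-{\rm Re}f(0)>0$ (recall $A=\sup_{\mathbb B}{\rm Re}f\ge{\rm Re}f(0)$). Define
\[
h_\varepsilon(q):=1-B_\varepsilon^{-1}\bigl(f(q)-f(0)\bigr)=1+\sum_{n=1}^{\infty}q^{n}\bigl(-B_\varepsilon^{-1}a_n\bigr),\qquad q\in\mathbb B.
\]
Since the slice regular functions on $\mathbb B$ form a right $\mathbb H$-module and $B_\varepsilon^{-1}$ is a \emph{real} scalar, $h_\varepsilon$ is again slice regular, $h_\varepsilon(0)=1$, and on $\mathbb B$ one has ${\rm Re}\,h_\varepsilon=1-B_\varepsilon^{-1}\bigl({\rm Re}f-{\rm Re}f(0)\bigr)\ge 1-B_\varepsilon^{-1}\bigl(A-{\rm Re}f(0)\bigr)=\varepsilon/B_\varepsilon>0$. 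Hence Theorem~\ref{CT3} applies to $h_\varepsilon$: its coefficients obey $|B_\varepsilon^{-1}a_n|\le 2$, and ${\rm Re}\,h_\varepsilon(q)\ge\frac{1-|q|}{1+|q|}$ for all $q\in\mathbb B$. The first inequality reads $|a_n|\le 2B_\varepsilon$, and letting $\varepsilon\to0^{+}$ gives \eqref{114} (in particular $a_n=0$ when $A={\rm Re}f(0)$). Rewriting the second as ${\rm Re}f(q)\le{\rm Re}f(0)+\tfrac{2|q|}{1+|q|}B_\varepsilon$, letting $\varepsilon\to0^{+}$, and then using that $t\mapsto\tfrac{2t}{1+t}$ is increasing and $A-{\rm Re}f(0)\ge0$, yields ${\rm Re}f(q)\le{\rm Re}f(0)+\tfrac{2r}{1+r}\bigl(A-{\rm Re}f(0)\bigr)$ for $|q|\le r<1$, which is \eqref{155} after collecting terms.

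With \eqref{114} in hand the remaining two estimates are routine. Because $|a_n|\le 2(A-{\rm Re}f(0))$, the series $\sum q^n a_n$ converges absolutely and locally uniformly on $\mathbb B$, so for $|q|\le r<1$
\[
|f(q)-f(0)|\le\sum_{n=1}^{\infty}|q|^{n}|a_n|\le 2\bigl(A-{\rm Re}f(0)\bigr)\sum_{n=1}^{\infty}|q|^{n}=\frac{2|q|}{1-|q|}\bigl(A-{\rm Re}f(0)\bigr)\le\frac{2r}{1-r}\bigl(A-{\rm Re}f(0)\bigr),
\]
which is \eqref{115}. For \eqref{116} one uses that the $n$-th slice derivative of $f$ is $f^{(n)}(q)=\sum_{k\ge0}\tfrac{(k+n)!}{k!}q^{k}a_{k+n}$, so that, with the elementary identity $\sum_{k\ge0}\binom{k+n}{n}t^{k}=(1-t)^{-(n+1)}$,
\[
\bigl|f^{(n)}(q)\bigr|\le 2\bigl(A-{\rm Re}f(0)\bigr)\,n!\sum_{k\ge0}\binom{k+n}{n}|q|^{k}=\frac{2\,n!}{(1-|q|)^{n+1}}\bigl(A-{\rm Re}f(0)\bigr)\le\frac{2\,n!}{(1-r)^{n+1}}\bigl(A-{\rm Re}f(0)\bigr)
\]
for $|q|\le r<1$.

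There is no serious obstacle here; the points deserving attention are, first, the insertion of the parameter $\varepsilon$, which forces ${\rm Re}\,h_\varepsilon>0$ \emph{strictly} (so that Theorem~\ref{CT3} genuinely applies) and simultaneously disposes of the degenerate case $A={\rm Re}f(0)$ without invoking the maximum modulus principle; and second, the verification that the series manipulations defining $h_\varepsilon$ and computing $f^{(n)}$ are legitimate, which is immediate from the absolute, locally uniform convergence on $\mathbb B$ of all the series involved.
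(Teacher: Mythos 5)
Your proof is correct, but it follows a genuinely different route from the paper's designated proof of Theorem \ref{CT4}. The paper proves the theorem from scratch: it first shows (via the open mapping theorem) that ${\rm Re}f<A$ strictly when $f$ is non-constant, then forms the M\"obius-type transform $g=(2A-f)^{-\ast}\ast f$ (after normalizing $f(0)=0$), applies the Schwarz lemma and the Leibniz rule to obtain the bounds on $|f-f(0)|$, ${\rm Re}f$ and $|a_1|$, and finally repeats the root-of-unity averaging construction $\varphi(q)=\sum_m q^m a_{mn_0}$, $h(q)=\varphi(q^{n_0})$ to reduce the bound on $|a_{n_0}|$ to the first-coefficient case; inequality (\ref{116}) is then obtained exactly as you do, by termwise differentiation, and the paper remarks that (\ref{115}) can be obtained the same way. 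Your argument instead imports Theorem \ref{CT3} wholesale through the real-affine substitution $h_\varepsilon=1-B_\varepsilon^{-1}(f-f(0))$; this is legitimate since Theorem \ref{CT3} is proved independently, and it is essentially the direction ``Theorem \ref{CT3} implies Theorem \ref{CT4}'' of the equivalence the paper records in its final theorem (there the auxiliary function is $g(q)=\bigl(A+{\rm Re}f(0)-f(q)-\overline{f(0)}\bigr)/\bigl(A-{\rm Re}f(0)\bigr)$). Two points in your favour: the $\varepsilon$-shift $B_\varepsilon=A+\varepsilon-{\rm Re}f(0)$ makes ${\rm Re}\,h_\varepsilon$ strictly positive without invoking the paper's lemma on the maximum of the real part, and it disposes of the degenerate case $A={\rm Re}f(0)$ automatically, whereas the paper's equivalence proof must exclude constant $f$ separately and divide by $A-{\rm Re}f(0)$. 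What you give up is only self-containedness: you rely on the full strength of Theorem \ref{CT3} (in particular on the averaging argument buried in its proof of $|a_n|\le 2$), while the paper's direct proof shows that the same machinery runs just as well with the bound $A$ in place of positivity of the real part.
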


We remark as pointed to us by Sabadini  that the weak inequalities  $$|{\rm{Re}}(a_n)|\leq 2$$ other than (\ref{eq:11an})  can only be deduced  with the approach as in \cite{Alpay} via the  Herglotz integral formula, since $d\mu_2(t)$ in Corollary 8.4 of   \cite{Alpay}  may  not be  a non-negative measure  in general.
Incidentally, the fact that $|{\rm{Re}}(a_n)|\leq 2$ can also be proved by using the splitting lemma for slice regular functions and the Schwarz integral formula for holomorphic functions of one complex variable. However, the two methods can not be used to prove the sharp version. In this article we can apply  the approach of finite average to deduce the  strong  version as in (\ref{eq:11an}). In addition, a weaken inequality than (\ref{115}) in Theorem \ref{CT4} has been proved in \cite{CGS1} and our new approach allows to strengthen the statements. Finally, we point out   that Theorems \ref{CT3} and  \ref{CT4} turn out to be equivalent.

\section{Preliminaries}

We recall in this section some preliminary definitions and results on slice regular functions.
To have a more complete insight on the theory, we refer the reader to \cite{GSS}.

Let $\mathbb H$ denote the non-commutative, associative, real algebra of quaternions with standard basis $\{1,\,i,\,j, \,k\}$,  subject to the multiplication rules
$$i^2=j^2=k^2=ijk=-1.$$
 Every element $q=x_0+x_1i+x_2j+x_3k$ in $\mathbb H$ is composed by the \textit{real} part ${\rm{Re}}\, (q)=x_0$ and the \textit{imaginary} part ${\rm{Im}}\, (q)=x_1i+x_2j+x_3k$. The \textit{conjugate} of $q\in \mathbb H$ is then $\bar{q}={\rm{Re}}\, (q)-{\rm{Im}}\, (q)$ and its \textit{modulus} is defined by $|q|^2=q\overline{q}=|{\rm{Re}}\, (q)|^2+|{\rm{Im}}\, (q)|^2$. We can therefore calculate the multiplicative inverse of each $q\neq0$ as $ q^{-1} =|q|^{-2}\overline{q}$.
 Every $q \in \mathbb H $ can be expressed as $q = x + yI$, where $x, y \in \mathbb R$ and
$$I=\dfrac{{\rm{Im}}\, (q)}{|{\rm{Im}}\, (q)|}$$
 if ${\rm{Im}}\, q\neq 0$, otherwise we take $I$ arbitrarily such that $I^2=-1$.
Then $I $ is an element of the unit 2-sphere of purely imaginary quaternions,
$$\mathbb S=\big\{q \in \mathbb H:q^2 =-1\big\}.$$

For every $I \in \mathbb S $ we will denote by $\mathbb C_I$ the plane $ \mathbb R \oplus I\mathbb R $, isomorphic to $ \mathbb C$, and, if $\Omega \subseteq \mathbb H$, by $\Omega_I$ the intersection $ \Omega \cap \mathbb C_I $. Also, for $R>0$, we will denote the open ball centred at the origin with radius $R$ by
$$B(0,R)=\big\{q \in \mathbb H:|q|<R\big\}.$$

We can now recall the definition of slice regularity.
\begin{defn} \label{de: regular} Let $\Omega$ be a domain in $\mathbb H$. A function $f :\Omega \rightarrow \mathbb H$ is called \emph{slice} \emph{regular} if, for all $ I \in \mathbb S$, its restriction $f_I$ to $\Omega_I$ is \emph{holomorphic}, i.e., it has continuous partial derivatives and satisfies
$$\bar{\partial}_I f(x+yI):=\frac{1}{2}\left(\frac{\partial}{\partial x}+I\frac{\partial}{\partial y}\right)f_I (x+yI)=0$$
for all $x+yI\in \Omega_I $.
 \end{defn}
As shown in \cite {CGSS}, a class of domains, the so-called symmetric slice domains naturally qualify as  domains of definition  of slice regular functions.

\begin{defn} \label{de: domain}
Let $\Omega$ be a domain in $\mathbb H $.  $\Omega$ is called a \textit{slice domain}  if $\Omega$ intersects the real axis and $\Omega_I$  is a domain
of $ \mathbb C_I $  for any $I \in \mathbb S $.

Moreover,  if  $x + yI \in \Omega$ implies $x + y\mathbb S \subseteq \Omega $ for any $x,y \in \mathbb R $ and $I\in \mathbb S$, then
 $\Omega$  is called a \textit{symmetric slice domain}.
\end{defn}

From now on, we will omit the term `slice' when referring to slice regular functions and will focus mainly on regular functions on $B(0,R)=\big\{q \in \mathbb H:|q|<R\big\}$.
For regular functions the natural definition of derivative is given by the following (see \cite{GS1,GS2}).
\begin{defn} \label{de: derivative}
Let $f :B(0,R) \rightarrow \mathbb H$  be a regular function. The \emph{slice derivative} of $f$ at $q=x+yI$
is defined by
$$\partial_I f(x+yI):=\frac{1}{2}\left(\frac{\partial}{\partial x}-I\frac{\partial}{\partial y}\right)f_I (x+yI).$$
 \end{defn}
Notice that the operators $\partial_I$ and $\bar{\partial}_I $ commute, and $\partial_I f=\frac{\partial f}{\partial x}$ for regular functions. Therefore, the slice derivative of a regular function is still regular so that we can iterate the differentiation to obtain the $n$-th
slice derivative
$$\partial^{n}_I f=\frac{\partial^{n} f}{\partial x^{n}},\quad\,\forall \,\, n\in \mathbb N. $$

In what follows, for the sake of simplicity, we will direct denote the $n$-th slice derivative $\partial^{n}_I f$ by $f^{(n)}$ for every $n\in \mathbb N$.

As shown in \cite{GS2}, a quaternionic power series $\sum\limits_{n=0}^{\infty}q^n a_n$ with $\{a_n\}_{n \in \mathbb N} \subset \mathbb H$ defines a regular function in its domain of convergence, which proves to be a open ball $B(0,R)$ with $R$ equal to the radius of convergence of the power series. The converse result is also true.
\begin{thm}{\bf(Taylor Expansion)}\label{eq:Taylor}
A function f is regular on $B = B(0,R) $ if and only if $f$ has a power series expansion
$$f(q)=\sum\limits_{n=0}^{\infty}q^n a_n\quad with \quad a_n=\frac{f^{(n)}(0)}{n!}.$$
\end{thm}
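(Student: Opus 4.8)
The statement is an equivalence, so I would prove the two implications separately, treating the ``power series $\Rightarrow$ regular'' direction as the warm-up and the ``regular $\Rightarrow$ power series'' direction as the substantial half.

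For the implication that a convergent power series defines a regular function, I would restrict to an arbitrary slice $\mathbb{C}_I$ and write $q=x+yI$, so that $f_I(x+yI)=\sum_{n=0}^\infty (x+yI)^n a_n$. Since $x+yI$ behaves exactly as a complex variable with imaginary unit $I$, a direct computation gives $\bar\partial_I (x+yI)^n=\frac12(\partial_x+I\partial_y)(x+yI)^n=0$; because the $a_n$ enter by right multiplication, which commutes with the left differential operator $\bar\partial_I$, each term is annihilated. Uniform convergence on compact subsets of $B(0,R)$ then permits term-by-term differentiation, so $\bar\partial_I f_I\equiv 0$ for every $I$, i.e. $f$ is regular. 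Differentiating instead with $\partial_I=\partial_x$ term by term and evaluating at the origin yields $f^{(n)}(0)=n!\,a_n$, establishing the coefficient formula.

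For the converse, I would invoke the Splitting Lemma: fixing $I\in\mathbb{S}$ and choosing $J\in\mathbb{S}$ with $IJ=-JI$, the restriction $f_I$ decomposes as $f_I=F+GJ$, where $F,G\colon B(0,R)_I\to\mathbb{C}_I$ are holomorphic in the classical sense. The one-variable Taylor theorem expands $F$ and $G$ as power series on the disc $B(0,R)_I$ of radius $R$, and recombining gives $f_I(z)=\sum_{n=0}^\infty z^n a_n$ with $a_n\in\mathbb{H}$, convergent for all $|z|<R$ in $\mathbb{C}_I$. Since the origin is real, the slice derivative $f^{(n)}(0)=\partial_x^n f(0)$ does not depend on $I$, and comparing with the term-by-term derivative of this slice series identifies $a_n=f^{(n)}(0)/n!$.

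It then remains to upgrade the slice expansion to the whole ball. Because the series converges on the full disc $B(0,R)_I$, the quaternionic power series $g(q):=\sum_{n=0}^\infty q^n a_n$ has radius of convergence at least $R$, hence defines a regular function on $B(0,R)$ by the first part. Both $f$ and $g$ are regular on the symmetric slice domain $B(0,R)$ and coincide on the entire slice $\mathbb{C}_I\cap B(0,R)$, so the Identity Principle forces $f\equiv g$ on $B(0,R)$. I expect the main obstacle to be precisely this last passage from one slice to the full ball: one must check that the coefficients extracted from a single slice are genuinely slice-independent (guaranteed by the real-point formula for $a_n$) and that the radius of convergence does not shrink, so that the Identity Principle can legitimately be applied.
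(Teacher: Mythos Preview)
The paper does not actually prove this theorem; it appears in the Preliminaries section as a known result, attributed to Gentili and Struppa \cite{GS2}. Your proposed argument is correct and is essentially the standard proof one finds in that reference: the forward direction by direct verification that $\bar\partial_I$ annihilates each term $q^n a_n$ on every slice (right multiplication by the constant $a_n$ commuting with the left operator $\bar\partial_I$), and the converse via the Splitting Lemma to reduce to the classical Taylor expansion on a single slice $\mathbb{C}_I$, followed by the Identity Principle to extend the equality $f=\sum_n q^n a_n$ from that slice to the whole ball. Your attention to the slice-independence of the coefficients (through the real-point formula $a_n=f^{(n)}(0)/n!$) and to the radius of convergence not shrinking is exactly what is needed to make the last step legitimate.
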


A fundamental result in the theory of regular functions is described by the splitting lemma (see \cite{GS2}), which relates slice regularity to classical holomorphy.
\begin{lemma}{\bf(Splitting Lemma)}\label{eq:Splitting}
Let $f$ be a regular function on $B = B(0,R)$, then for any $I\in \mathbb S$ and any $J\in \mathbb S$ with $J\perp I$, there exist two holomorphic functions $F,G:B_I\rightarrow \mathbb C_I$ such that for every $z=x+yI\in B_I $, the following equality holds
$$f_I(z)=F(z)+G(z)J.$$
\end{lemma}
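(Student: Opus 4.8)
The statement to be proved is the Splitting Lemma, and my plan is the standard one: realize $\mathbb H$ as the right $\mathbb C_I$-module $\mathbb C_I\oplus\mathbb C_I J$ and check that the operator $\bar\partial_I$ respects that splitting.

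First I would set up the algebra. Since $I,J\in\mathbb S$ with $J\perp I$, the element $IJ$ is again a purely imaginary unit quaternion, orthogonal to both $I$ and $J$; hence $\{1,I,J,IJ\}$ is an orthonormal $\mathbb R$-basis of $\mathbb H$, i.e. $\mathbb H=\mathbb C_I\oplus\mathbb C_I J$ with $\mathbb C_I J=\mathbb R J\oplus\mathbb R IJ$ and the sum direct. Therefore each value $f_I(z)\in\mathbb H$, $z\in B_I$, can be written uniquely as $f_I(z)=F(z)+G(z)J$ with $F(z),G(z)\in\mathbb C_I$; denote by $\pi_1,\pi_2\colon\mathbb H\to\mathbb C_I$ the corresponding projections, so $F=\pi_1\circ f_I$ and $G=\pi_2\circ f_I$. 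Each $\pi_k$ is $\mathbb R$-linear, and it is even left $\mathbb C_I$-linear, because for $w=a+bJ$ (with $a,b\in\mathbb C_I$) and $\lambda\in\mathbb C_I$ one has $\lambda w=(\lambda a)+(\lambda b)J$, whence $\pi_k(\lambda w)=\lambda\pi_k(w)$; in particular $\pi_k$ commutes with left multiplication by the constant $I$. Also, since $\pi_k$ is linear and continuous while $f_I$ has continuous partial derivatives, so do $F$ and $G$.

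Next I would bring in the regularity of $f$. The operator $\bar\partial_I=\tfrac12\big(\tfrac{\partial}{\partial x}+I\tfrac{\partial}{\partial y}\big)$ consists of real differentiation followed by left multiplication by $I\in\mathbb C_I$, so it commutes with $\pi_1$ and $\pi_2$; applying this to $\bar\partial_I f_I=0$ gives
$$\bar\partial_I F=\pi_1(\bar\partial_I f_I)=0,\qquad\bar\partial_I G=\pi_2(\bar\partial_I f_I)=0$$
throughout $B_I$. Under the identification $\mathbb C_I\cong\mathbb C$ these are the Cauchy--Riemann equations, so $F$ and $G$ are holomorphic on $B_I$, which proves the lemma. (Equivalently, without projections: $\bar\partial_I(G(x+yI)J)=(\bar\partial_I G)(z)J$ because $J$ is constant, and $(\bar\partial_I F)(z),(\bar\partial_I G)(z)\in\mathbb C_I$ because $F,G$ are $\mathbb C_I$-valued and $I\in\mathbb C_I$; hence $0=\bar\partial_I f_I=(\bar\partial_I F)(z)+(\bar\partial_I G)(z)J$ is the decomposition of $0$ in $\mathbb C_I\oplus\mathbb C_I J$, so both summands vanish.)

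The argument is bookkeeping rather than analysis, so there is no serious obstacle; the three points to be careful about are that $J\perp I$ is exactly what makes $\{1,I,J,IJ\}$ a basis, so that $\mathbb C_I J$ is a genuine complement of $\mathbb C_I$; that $\bar\partial_I$ leaves the constant right factor $J$ untouched; and that $\bar\partial_I$ sends $\mathbb C_I$-valued $C^1$ functions to $\mathbb C_I$-valued functions precisely because $I\in\mathbb C_I$ --- this last fact is what permits splitting the single quaternionic identity $\bar\partial_I f_I=0$ into the two scalar equations $\bar\partial_I F=\bar\partial_I G=0$.
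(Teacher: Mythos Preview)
Your argument is correct and is precisely the standard proof of the Splitting Lemma: split $\mathbb H=\mathbb C_I\oplus\mathbb C_I J$ via the orthonormal basis $\{1,I,J,IJ\}$, write $f_I=F+GJ$, and use that $\bar\partial_I$ is real differentiation plus left multiplication by $I\in\mathbb C_I$ to conclude that $\bar\partial_I f_I=0$ forces $\bar\partial_I F=\bar\partial_I G=0$. The details you flag (that $J\perp I$ gives a genuine direct sum, that the projections are left $\mathbb C_I$-linear, and that the constant right factor $J$ is untouched by $\bar\partial_I$) are exactly the points that make the splitting go through.

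Note, however, that the paper does \emph{not} give its own proof of this lemma: it is merely quoted from \cite{GS2} as a known preliminary result. So there is nothing to compare against; your write-up is essentially the original argument of Gentili and Struppa.
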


The following version of the identity principle is one of the first consequences
(as shown in \cite{GS2}).
\begin{thm}{\bf(Identity Principle)}\label{th:IP-theorem}
Let $f$ be a regular function on $B = B(0,R)$. Denote by $\mathcal{Z}_f$ the zero set of $f$, $$\mathcal{Z}_f=\big\{q\in B:f(q)=0\big\}.$$
If there exists an $I\in \mathbb S$ such that $B_I\cap \mathcal{Z}_f$ has an accumulation point in $B_I$, then $f$ vanishes identically on $B$.
\end{thm}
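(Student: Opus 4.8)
The plan is to reduce the quaternionic statement to the classical identity principle for holomorphic functions of one complex variable by means of the Splitting Lemma, and then to propagate the vanishing from a single slice to the whole ball through the Taylor expansion.

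First I would fix the $I\in\mathbb S$ furnished by the hypothesis and choose any $J\in\mathbb S$ with $J\perp I$. By the Splitting Lemma (Lemma \ref{eq:Splitting}) there exist holomorphic functions $F,G:B_I\rightarrow\mathbb C_I$ with $f_I(z)=F(z)+G(z)J$ for all $z\in B_I$. The elementary but essential observation is that this decomposition is unique: since $J\perp I$, the set $\{1,I,J,IJ\}$ is an orthonormal $\mathbb R$-basis of $\mathbb H$, so an identity $a+bJ=0$ with $a,b\in\mathbb C_I$ forces $a=b=0$. Consequently $f_I(z)=0$ if and only if $F(z)=G(z)=0$, and therefore $B_I\cap\mathcal{Z}_f\subseteq\mathcal{Z}_F\cap\mathcal{Z}_G$, where $\mathcal{Z}_F$ and $\mathcal{Z}_G$ denote the zero sets of $F$ and $G$ in $B_I$.

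Next I would invoke the one-variable theory. Since $B=B(0,R)$ is centred at the origin, the slice $B_I=B\cap\mathbb C_I$ is an open disc of radius $R$ in the plane $\mathbb C_I\cong\mathbb C$, hence a domain. If $B_I\cap\mathcal{Z}_f$ has an accumulation point $p\in B_I$, then $p$ is an accumulation point of both $\mathcal{Z}_F$ and $\mathcal{Z}_G$, so the classical identity principle for holomorphic functions yields $F\equiv0$ and $G\equiv0$ on $B_I$, whence $f_I\equiv0$ on $B_I$.

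Finally I would propagate this to all of $B$. Because $0$ is a real point lying in every slice, the Taylor Expansion (Theorem \ref{eq:Taylor}) gives $f(q)=\sum_{n=0}^{\infty}q^n a_n$ with $a_n=f^{(n)}(0)/n!=\partial_I^{\,n}f(0)/n!$; and since $f_I\equiv0$ on $B_I$, iterating $\partial_I f=\partial f/\partial x$ on the slice shows that every $\partial_I^{\,n}f$ vanishes at $0$, so $a_n=0$ for all $n$ and thus $f\equiv0$ on $B$. I expect the main obstacle to lie in this last propagation step: for an arbitrary function, vanishing on a single slice need not force vanishing globally, and it is precisely the fact that the Taylor coefficients are governed by the slice derivatives at the \emph{real} point $0\in B_I$ that makes the argument close. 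The uniqueness of the splitting decomposition in the first step is the other place where the non-commutative structure of $\mathbb H$ must be handled with care.
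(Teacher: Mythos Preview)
Your argument is correct, but note that the paper does not actually prove this statement: the Identity Principle is quoted in the Preliminaries as a known result from \cite{GS2}, with no proof given. There is therefore nothing in the paper to compare your proposal against.

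That said, the route you outline---apply the Splitting Lemma on the distinguished slice $B_I$, use the classical identity theorem to kill both holomorphic components $F$ and $G$, and then read off the vanishing of all Taylor coefficients at the real centre $0$ to propagate to the whole ball---is exactly the standard proof that appears in the cited source. Your handling of both delicate points (the uniqueness of the $F+GJ$ decomposition via the orthonormal basis $\{1,I,J,IJ\}$, and the fact that $0\in B_I$ is real so that the slice derivatives there determine the global Taylor coefficients) is accurate. One minor remark: you should check that the Taylor Expansion theorem you invoke does not itself rely on the Identity Principle; in the original development of \cite{GS2} it does not, so there is no circularity.
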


\begin{rem}\label{R-extension}
Let $f_I$ be a holomorphic function on a disc $B_I=B(0,R)\cap \mathbb C_I$ and let its
power series expansion take the form
$$f_I(z)=\sum\limits_{n=0}^{\infty}z^na_n$$
with $\{a_n\}_{n\in \mathbb N}\subset\mathbb H$. Then the unique regular extension of $f_I$ to the whole ball $B(0,R)$ is the function
defined by
$$f(q):={\rm{ext}}(f_I)(q)=\sum\limits_{n=0}^{\infty}q^na_n.$$
The uniqueness is guaranteed by the identity principle \ref{th:IP-theorem}.
\end{rem}

In general, the pointwise product of two regular functions is not a regular function. To guarantee the regularity of the product we need to introduce a new multiplication operation, the regular product (or $\ast$-product). On open balls centred at the origin, the $\ast$-product of two regular functions is defined by means of their power series
expansions (see \cite{GSS1}).

\begin{defn}\label{R-product}
Let $f$, $g:B=B(0,R)\rightarrow \mathbb H$ be two regular functions and let
$$f(q)=\sum\limits_{n=0}^{\infty}q^na_n,\qquad g(q)=\sum\limits_{n=0}^{\infty}q^nb_n$$
be their series expansions. The regular product (or $\ast$-product) of $f$ and $g$ is the function defined by
$$f\ast g(q)=\sum\limits_{n=0}^{\infty}q^n\bigg(\sum\limits_{k=0}^n a_kb_{n-k}\bigg)$$
regular on $B$.
\end{defn}
Notice that the $\ast$-product is associative and is not, in general, commutative. Its connection with the usual pointwise product is clarified by the following result (see \cite{GSS1}).
\begin{prop} \label{prop:RP}
Let $f$ and $g$ be two regular functions on $B=B(0,R)$. Then for all $q\in B$,
$$f\ast g(q)=
\left\{
\begin{array}{lll}
f(q)g(f(q)^{-1}qf(q)) \qquad \,\,if \qquad f(q)\neq 0;
\\
\qquad  \qquad  0\qquad  \qquad \qquad if \qquad f(q)=0.
\end{array}
\right.
$$
\end{prop}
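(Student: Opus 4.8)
The plan is to expand the series defining $f\ast g(q)$, reorganise it into a single series with the value $f(q)$ inserted, and then split into the two cases. Write $f(q)=\sum_{k\ge0}q^k a_k$ and $g(q)=\sum_{l\ge0}q^l b_l$. For $q\in B=B(0,R)$ the double series $\sum_{k,l\ge0}q^{k+l}a_k b_l$ is absolutely convergent, being dominated by $\big(\sum_k|q|^k|a_k|\big)\big(\sum_l|q|^l|b_l|\big)<\infty$, so it may be summed in any order. Grouping the terms according to $n=k+l$ recovers Definition \ref{R-product}, so $f\ast g(q)=\sum_{k,l\ge0}q^{k+l}a_k b_l$; summing over $k$ first, for each fixed $l$ one has $\sum_{k\ge0}q^{k+l}a_k b_l=q^l\big(\sum_{k\ge0}q^k a_k\big)b_l=q^l f(q)\,b_l$, and hence
\[
f\ast g(q)=\sum_{l\ge0}q^l\,f(q)\,b_l .
\]
This identity is the crux of the proof; both alternatives follow from it immediately.

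If $f(q)=0$, then every summand $q^l f(q)\,b_l$ vanishes and $f\ast g(q)=0$. If $f(q)\ne0$, set $\widetilde q=f(q)^{-1}q f(q)$; since $|\widetilde q|=|q|<R$, the function $g$ and its power series are defined at $\widetilde q$. For each $l\ge0$,
\[
q^l f(q)=f(q)\big(f(q)^{-1}q^l f(q)\big)=f(q)\big(f(q)^{-1}q f(q)\big)^l=f(q)\,\widetilde q^{\,l},
\]
because conjugation by the nonzero quaternion $f(q)$ is an algebra automorphism of $\mathbb H$. Substituting this into the displayed identity and pulling the constant $f(q)$ out of the (absolutely convergent) sum gives $f\ast g(q)=f(q)\sum_{l\ge0}\widetilde q^{\,l}b_l=f(q)\,g\big(f(q)^{-1}q f(q)\big)$, which is the first alternative.

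The only delicate points are the rearrangements of the double series — all legitimate by absolute convergence on $B(0,R)$ — and the direction of the conjugation: it must be $f(q)$, not $f(q)^{-1}$, that conjugates $q$, precisely because in $f\ast g(q)=\sum_l q^l f(q)\,b_l$ the factor $q^l$ stands to the \emph{left} of $f(q)$, and the identity $q^l f(q)=f(q)\,(f(q)^{-1}q f(q))^l$ is what moves it across. I do not expect a genuine obstacle beyond this bookkeeping.
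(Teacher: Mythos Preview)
Your argument is correct. The paper itself does not supply a proof of this proposition: it is quoted from \cite{GSS1} as a known result, so there is no ``paper's own proof'' to compare against. Your proof is essentially the standard one found in that reference --- expand the Cauchy product as an absolutely convergent double sum, regroup to obtain $f\ast g(q)=\sum_{l\ge0}q^l f(q)\,b_l$, and then use the automorphism $x\mapsto f(q)^{-1}xf(q)$ to push $f(q)$ to the left. The bookkeeping you flag (absolute convergence to justify reordering, and the direction of conjugation) is exactly what is needed, and you have handled both correctly.
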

We remark that if $q=x+yI$ and $f(q)\neq0$, then $f(q)^{-1}qf(q)$ has the same modulus and same real part as $q$. Therefore $f(q)^{-1}qf(q)$ lies in the same 2-sphere $x+y\mathbb S$ as $q$. We obtain then that a zero $x_0+y_0I$ of the function $g$ is not necessarily a zero of $f\ast g$, but an element on the same sphere $x_0+y_0\mathbb S$ does. In particular, a
real zero of $g$ is still a zero of $f\ast g$. To present a characterization of the structure of the zero set of a regular function $f$, we need to introduce the following functions.
\begin{defn} \label{de: R-conjugate}
Let $ f(q)=\sum\limits_{n=0}^{\infty}q^na_n $ be a regular function on $B=B(0,R)$. We define the \emph{regular conjugate} of $f$ as
$$f^c(q)=\sum\limits_{n=0}^{\infty}q^n\bar{a}_n,$$
and the \emph{symmetrization} of $f$ as
$$f^s(q)=f\ast f^c(q)=f^c\ast f(q)=\sum\limits_{n=0}^{\infty}q^n\bigg(\sum\limits_{k=0}^n a_k\bar{a}_{n-k}\bigg).$$
Both $f^c$ and $f^s$ are regular functions on $B$.
\end{defn}
We are now able to define the inverse element of a regular function $f$ with respect to the $\ast$-product. Let $\mathcal{Z}_{f^s}$ denote the zero set of the symmetrization $f^s$ of $f$.
\begin{defn} \label{de: R-Inverse}
Let $f$ be a regular function on $B=B(0,R)$. If $f$ does not vanish identically, its \emph{regular reciprocal} is the function defined by
$$f^{-\ast}(q):=f^s(q)^{-1}f^c(q)$$
regular on $B \setminus \mathcal{Z}_{f^s}$.

\end{defn}
The following result shows that the regular product is nicely related to the pointwise quotient (see \cite{Stop1} and for general case see \cite{Stop2}).
\begin{prop} \label{prop:Quotient Relation}
Let $f$ and $g$ be regular functions on  $B=B(0,R)$. Then for all $q\in B \setminus \mathcal{Z}_{f^s}$, $$f^{-\ast}\ast g(q)=f(T_f(q))^{-1}g(T_f(q)),$$
where $T_f:B \setminus \mathcal{Z}_{f^s}\rightarrow B \setminus \mathcal{Z}_{f^s}$ is defined by
$T_f(q)=f^c(q)^{-1}qf^c(q)$. Furthermore, $T_f$ and $T_f^c$ are mutual inverses so that $T_f$ is a diffeomorphism.
\end{prop}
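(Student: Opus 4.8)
The plan is to reduce everything to the pointwise product formula of Proposition \ref{prop:RP}, after first isolating the one structural fact that makes the quaternionic bookkeeping work: the symmetrization $f^s$ has \emph{real} coefficients. Indeed, writing $f^s(q)=\sum_n q^n c_n$ with $c_n=\sum_{k=0}^n a_k\bar a_{n-k}$, a direct conjugation gives $\overline{c_n}=\sum_{k=0}^n a_{n-k}\bar a_k=c_n$, so each $c_n\in\mathbb R$. Consequently, on each slice $\mathbb C_I$ the value $f^s(q)$ lies in $\mathbb C_I$ and therefore commutes with $q=x+yI$; moreover $f^s$ maps the sphere $x+y\mathbb S$ onto a sphere, so $\mathcal Z_{f^s}$ is a union of spheres. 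Since $T_f(q)=f^c(q)^{-1}qf^c(q)$ has the same real part and modulus as $q$, hence lies on the same sphere, this already shows $T_f$ maps $B\setminus\mathcal Z_{f^s}$ into itself. I would record at the outset that $f^c(q)\neq0$ on $B\setminus\mathcal Z_{f^s}$ (otherwise $f^s=f^c\ast f$ would vanish there), so that $T_f$, $f^{-\ast}$ are well defined and $f^{-\ast}(q)\neq0$.

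The heart of the argument is a single application of Proposition \ref{prop:RP} to the factorization $f^s=f^c\ast f$. Since $T_f(q)=f^c(q)^{-1}qf^c(q)$ is precisely the conjugated argument appearing in that formula, I obtain the key identity
\begin{equation*}
f^s(q)=f^c(q)\,f\big(T_f(q)\big),\qquad q\in B\setminus\mathcal Z_{f^s}.
\end{equation*}
Feeding this into the definition $f^{-\ast}(q)=f^s(q)^{-1}f^c(q)$ gives at once
\begin{equation*}
f^{-\ast}(q)=f\big(T_f(q)\big)^{-1}f^c(q)^{-1}f^c(q)=f\big(T_f(q)\big)^{-1}.
\end{equation*}
It remains to identify the argument produced when Proposition \ref{prop:RP} is applied to $f^{-\ast}\ast g$. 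Writing $f^{-\ast}(q)^{-1}=f^c(q)^{-1}f^s(q)$ and using that $f^s(q)$ commutes with $q$, the conjugation collapses:
\begin{equation*}
f^{-\ast}(q)^{-1}\,q\,f^{-\ast}(q)=f^c(q)^{-1}\,f^s(q)\,q\,f^s(q)^{-1}\,f^c(q)=f^c(q)^{-1}qf^c(q)=T_f(q).
\end{equation*}
Combining the last two displays with Proposition \ref{prop:RP} yields $f^{-\ast}\ast g(q)=f^{-\ast}(q)\,g(T_f(q))=f(T_f(q))^{-1}g(T_f(q))$, which is the asserted quotient relation.

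For the final claim I would show that $T_f$ and $T_f^c:=T_{f^c}$, where $T_{f^c}(q)=f(q)^{-1}qf(q)$ since $(f^c)^c=f$, are mutual inverses. Setting $p=T_f(q)$, the key identity gives $f^c(q)f(p)=f^s(q)$, and exploiting once more that $f^s(q)$ commutes with $q$ a short chain gives
\begin{equation*}
T_{f^c}(p)=f(p)^{-1}p\,f(p)=f(p)^{-1}f^c(q)^{-1}\,q\,f^c(q)f(p)=f(p)^{-1}f^c(q)^{-1}\,f^s(q)\,q=q,
\end{equation*}
so $T_{f^c}\circ T_f=\mathrm{id}$; the reverse composition follows by exchanging the roles of $f$ and $f^c$, noting $f^s=(f^c)^s$. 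Since $f$ and $f^c$ are nowhere zero on $B\setminus\mathcal Z_{f^s}$, both $T_f$ and $T_{f^c}$ are real-analytic there, whence $T_f$ is a diffeomorphism. I expect the main obstacle to be keeping the noncommutative products in the correct order: every simplification above rests on the commutativity of $f^s(q)$ with $q$, an observation valid precisely because $f^s$ has real coefficients, and it is this fact that must be invoked carefully at each step.
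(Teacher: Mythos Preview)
The paper does not actually prove this proposition; it is quoted from Stoppato's work (references \cite{Stop1}, \cite{Stop2}) as a known result. So there is no ``paper's own proof'' to compare against. That said, your argument is essentially the standard one in the literature and is correct: the key observations are exactly that $f^s$ has real coefficients (hence $f^s(q)$ commutes with $q$), that Proposition~\ref{prop:RP} applied to $f^c\ast f$ gives $f^s(q)=f^c(q)\,f(T_f(q))$, and that the conjugating factor arising from $f^{-\ast}$ collapses to $T_f(q)$ via this commutativity. The mutual-inverse computation is also correct.

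One technical point deserves a remark. Your last step applies Proposition~\ref{prop:RP} to the product $f^{-\ast}\ast g$, but in this paper Proposition~\ref{prop:RP} and the $\ast$-product itself (Definition~\ref{R-product}) are stated only for regular functions on a ball $B(0,R)$, given by convergent power series at the origin. The function $f^{-\ast}$ is in general only regular on $B\setminus\mathcal Z_{f^s}$, and need not admit such an expansion if $f(0)=0$. So strictly within the paper's framework you are tacitly using the extension of both the $\ast$-product and the pointwise formula to symmetric slice domains (which is precisely what is developed in \cite{CGSS} and \cite{Stop2}). If $f(0)\neq0$ you can argue on a small ball about the origin and then extend by the identity principle; for the general case you should either invoke the symmetric-slice-domain version of Proposition~\ref{prop:RP}, or avoid the issue entirely by writing $f^{-\ast}\ast g(q)=f^s(q)^{-1}\,(f^c\ast g)(q)$ and applying Proposition~\ref{prop:RP} only to $f^c\ast g$ (both factors regular on $B$), which together with $f^s(q)=f^c(q)f(T_f(q))$ gives the result directly. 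This is not a flaw in your reasoning, but it is worth making explicit.
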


The following Schwarz lemma was proved in \cite {GS2}.
\begin{lemma}{\bf(Schwarz Lemma)}\label{th:SL-theorem}
Let $f:\mathbb B\rightarrow\mathbb B$ be a regular function. If $f(0)=0$, then
$$|f(q)|\leq|q|$$
for all $q\in \mathbb B$ and $$|f'(0)|\leq1.$$
Both inequalities are strict {\rm{(}}except at $q=0${\rm{)}} unless $f(q)=qu$ for some $u\in \partial\mathbb B$.
\end{lemma}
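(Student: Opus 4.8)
The plan is to reduce the quaternionic statement to the classical one‑variable Schwarz lemma, first by factoring out the variable and then by applying the Splitting Lemma. Since $f(0)=0$, Theorem \ref{eq:Taylor} gives $f(q)=\sum_{n\ge1}q^na_n$; I set $g(q):=\sum_{n\ge0}q^na_{n+1}$, which is again regular on $\mathbb B$ because shifting indices does not change the radius of convergence. Because powers of a fixed $q$ commute we have $f(q)=q\,g(q)$ for every $q\in\mathbb B$ (equivalently $f=q\ast g$ in the sense of Definition \ref{R-product}, via Proposition \ref{prop:RP}), so by multiplicativity of the quaternionic modulus
$$|f(q)|=|q|\,|g(q)|,\qquad q\in\mathbb B,$$
and $g(0)=a_1=f'(0)$ by Theorem \ref{eq:Taylor}. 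Hence it suffices to prove $|g(q)|\le1$ on $\mathbb B$, together with the attendant rigidity.

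Next I would fix $I\in\mathbb S$ and $J\in\mathbb S$ with $J\perp I$, and use the Splitting Lemma \ref{eq:Splitting} to write $g_I(z)=F(z)+G(z)J$ with $F,G:\mathbb B_I\to\mathbb C_I$ holomorphic, so that $|g_I(z)|^2=|F(z)|^2+|G(z)|^2$ is a \emph{subharmonic} function on the disc $\mathbb B_I$ (a sum of squared moduli of holomorphic functions). Since $f$ takes values in $\mathbb B$ and $g_I(z)=z^{-1}f_I(z)$ for $z\ne0$, one has $|g_I(z)|^2=|f_I(z)|^2/|z|^2<1/r^2$ on each circle $|z|=r<1$; the maximum principle for subharmonic functions on $\{|z|\le r\}$ then gives $|g_I(z)|\le1/r$ there, and letting $r\to1^-$ yields $|g_I|\le1$ on $\mathbb B_I$. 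As $I$ was arbitrary, $|g|\le1$ on all of $\mathbb B$, whence $|f(q)|\le|q|$ and $|f'(0)|=|g(0)|\le1$. Note that the cruder move of applying the scalar Schwarz lemma separately to $F$ and $G$ only yields $|f(q)|\le\sqrt2\,|q|$, so it is the subharmonicity of $|F|^2+|G|^2$ that does the real work.

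For the rigidity part, suppose $|f(q_0)|=|q_0|$ for some $q_0\ne0$, or $|f'(0)|=1$; in either case $|g|$ attains the value $1$ at an interior point $q_1$ of some slice $\mathbb B_I$ (take $q_1=q_0$, resp.\ $q_1=0$). Then the subharmonic function $|g_I|^2\le1$ attains its maximum at the interior point $q_1$, hence is identically $1$ on $\mathbb B_I$; since $|F|^2$ and $|G|^2$ are subharmonic with harmonic sum, each is harmonic, and $\Delta|F|^2=4|F'|^2$ forces $F$, and likewise $G$, to be constant. Thus $g_I$ is constant on $\mathbb B_I$, so by the Identity Principle \ref{th:IP-theorem} $g\equiv u$ on $\mathbb B$ with $|u|=|g(q_1)|=1$, i.e.\ $u\in\partial\mathbb B$, and therefore $f(q)=qu$. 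Conversely, if $f(q)=qu$ with $u\in\partial\mathbb B$ then $|f(q)|=|q|$ everywhere and $f'(0)=u$ has modulus $1$; combining, both inequalities are strict for $q\ne0$ precisely when $f$ is not of this form.

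The only genuinely delicate points I anticipate are: (i) choosing the right auxiliary function $g=q^{-1}f$, so that after dividing out $q$ a maximum‑principle argument becomes available (a direct split‑and‑estimate loses a factor $\sqrt2$); and (ii) transferring constancy from a single slice $\mathbb B_I$ back to all of $\mathbb B$, which is exactly what the Identity Principle \ref{th:IP-theorem} supplies. Everything else is the standard complex Schwarz‑lemma and maximum‑principle machinery applied slice‑wise.
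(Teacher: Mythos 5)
Your proof is correct. Note that the paper itself does not prove this lemma---it is quoted as a known result from Gentili--Struppa \cite{GS2}---so the comparison is with that standard proof rather than with anything in the present text. The standard argument begins exactly as yours does, factoring $f(q)=q\,g(q)$ with $g$ regular and $g(0)=f'(0)$ (legitimate because the coefficients sit on the right and powers of a fixed $q$ commute with $q$), but then finishes by invoking the maximum modulus principle for slice regular functions applied to $g$. You instead descend to each slice via the Splitting Lemma and run the maximum principle on the subharmonic function $|F|^2+|G|^2=|g_I|^2$; this is a genuinely more elementary route, since it needs only classical planar subharmonicity rather than the quaternionic maximum modulus principle, and you correctly identify the trap of estimating $F$ and $G$ separately (which loses a factor $\sqrt2$). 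Your rigidity argument is also sound: equality forces $|g_I|^2\equiv1$ on a slice, harmonicity of the sum forces $F$ and $G$ harmonic hence constant (via $\Delta|F|^2=4|F'|^2$), and the Identity Principle propagates the constancy of $g$ from one slice to all of $\mathbb B$. The only point worth stating explicitly is that the Identity Principle is applied to the regular function $g-u$, whose zero set contains the whole slice $\mathbb B_I$; as written this is implicit but clearly intended.
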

The following Leibniz  rule was proved in \cite{GSS2}.
\begin{prop}{\bf(Leibniz rule)}\label{th:Leibniz-rule}
Let $f$ and $g$ be regular functions on  $B=B(0,R)$. Then
$$(f\ast g)'=f'\ast g+f\ast g'.$$

\end{prop}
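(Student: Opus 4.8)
The plan is to prove the identity by direct comparison of Taylor coefficients, since all four functions involved are regular on $B=B(0,R)$ and hence, by Theorem \ref{eq:Taylor}, are uniquely determined by their power series. Writing $f(q)=\sum_{n=0}^\infty q^n a_n$ and $g(q)=\sum_{n=0}^\infty q^n b_n$, I would first record the effect of the slice derivative on a power series: since $\partial_I h=\partial h/\partial x$ acts on each monomial by $\partial(q^n)/\partial x=n q^{n-1}$, one has for any regular $h(q)=\sum_n q^n c_n$ that $h'(q)=\sum_{n=0}^\infty (n+1)q^n c_{n+1}$, valid throughout $B$ by term-by-term differentiation inside the radius of convergence. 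In particular the coefficients of $f'$ and $g'$ are $(n+1)a_{n+1}$ and $(n+1)b_{n+1}$ respectively.

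Next I would compute the two sides coefficient by coefficient. On the left, using Definition \ref{R-product}, we have $f\ast g(q)=\sum_n q^n\big(\sum_{k=0}^n a_k b_{n-k}\big)$, so the coefficient of $q^m$ in $(f\ast g)'$ equals $(m+1)\sum_{k=0}^{m+1} a_k b_{m+1-k}$. On the right, applying Definition \ref{R-product} to $f'\ast g$ and to $f\ast g'$ gives, as the coefficient of $q^m$,
\[
\sum_{k=0}^{m}(k+1)\,a_{k+1}b_{m-k}+\sum_{k=0}^{m}(m-k+1)\,a_k b_{m-k+1}.
\]

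The remaining step is a reindexing argument. In the first sum I would substitute $j=k+1$ to obtain $\sum_{j=1}^{m+1} j\,a_j b_{m+1-j}$, which may be extended to the range $0\le j\le m+1$ since the $j=0$ term vanishes; the second sum already has the shape $\sum_{k=0}^{m}(m+1-k)\,a_k b_{m+1-k}$ and may be extended to $k=m+1$ since that term vanishes too. Adding the two and using $k+(m+1-k)=m+1$ collapses them into $(m+1)\sum_{k=0}^{m+1} a_k b_{m+1-k}$, exactly matching the coefficient of $q^m$ on the left. Since the coefficients agree for every $m$, the two regular functions coincide on $B$. I do not expect a genuine obstacle here: the only points requiring care are the bookkeeping of the index shifts and the (standard) justification that power series may be differentiated termwise and multiplied via the Cauchy-type convolution within $B(0,R)$. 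An alternative route via the Splitting Lemma \ref{eq:Splitting}, reducing to the classical complex Leibniz rule on each slice $\mathbb C_I$, would also work but is less direct and ultimately relies on the same convolution identity.
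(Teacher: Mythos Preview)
Your argument is correct: the coefficient comparison via Definition~\ref{R-product} and termwise differentiation goes through exactly as you describe, and the reindexing is clean. Note, however, that the paper does not supply its own proof of this proposition; it is quoted as a preliminary result from \cite{GSS2}, so there is no in-paper argument to compare against. Your power-series computation is in fact the standard proof one finds in that reference.
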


\section{Proof of the Main Theorems}
In this section, we give the proofs of Theorems \ref{CT3} and \ref{CT4}.

\begin{proof}[Proof of Theorem $\ref{CT3}$]
Denote
 $$g(q)=(f(q)-1)\ast(f(q)+1)^{-\ast}.$$
Notice that
 $$(f(q)-1)\ast(f(q)+1)^{-\ast}=(f(q)+1)^{-\ast}\ast(f(q)-1),$$
 we have
$$g(q)=(f(q)+1)^{-\ast}\ast(f(q)-1).$$
It is evident that   $g$ is regular on $\mathbb B$ and $g(0)=0$. By Proposition \ref{prop:Quotient Relation}, we have
$$g(q)=\big(f\circ T_{1+f}(q)+1\big)^{-1}\big(f\circ T_{1+f}(q)-1\big). $$
This together with the fact that ${\rm{Re}}(f(q))>0$ yields $g(\mathbb B)\subseteq\mathbb B$. Therefore,
 Lemma \ref{th:SL-theorem} implies  that
\begin{eqnarray}\label{eq:21}
|g(q)|\leq|q|, \qquad \forall\ q\in\mathbb B,
\end{eqnarray}
and
\begin{eqnarray}\label{eq:22}
|g'(0)|\leq1.
\end{eqnarray}
From the very definition of $g$ and Proposition $\ref{prop:Quotient Relation}$, it follows that
\begin{eqnarray}\label{eq:23}
f(q)&=&(1-g(q))^{-\ast}\ast(g(q)+1)\notag
\\
&=&\big(1-g\circ T_{1-g}(q)\big)^{-1}\big(g\circ T_{1-g}(q)+1\big).
\end{eqnarray}
Recall that $$|T_{1-g}(q)|=|q|, \qquad\forall\ q\in\mathbb B,$$ from $(\ref{eq:21})$ and $(\ref{eq:23})$ we obtain
\begin{eqnarray}\label{eq:24}
|f(q)|
\leq\frac{1+|T_{1-g}(q)|}{1-|T_{1-g}(q)|}=\frac{1+|q|}{1-|q|}
\end{eqnarray}
and similarly
\begin{equation}\label{eq:25}
{\rm{Re}}(f(q))=
\frac{1-|g\circ T_{1-g}(q)|^2}{|1-g\circ T_{1-g}(q)|^2}
\geq\frac{1-|q|^2}{(1+|q|)^2}
= \frac{1-|q|}{1+|q|}.
\end{equation}

If equality holds  in $(\ref{eq:24})$ or $(\ref{eq:25})$ at some point $q_0\neq 0$, then  it must be true that $|g(q_0)|=|q_0|$. Lemma \ref{th:SL-theorem} thus implies  that $$g(q)=q e^{I\theta},\quad \forall \,\,q\in \mathbb B,$$
for some $I\in \mathbb S$ and some $\theta\in\mathbb R$, and hence
$$f(q)=(1-q e^{I\theta})^{-\ast}\ast(1+qe^{I\theta}),\quad \forall \,\,q\in \mathbb B.$$
The converse part can be easily verified by a simple calculation.

We now come to prove  the assertion  $$|a_n|\leq2,\quad n=1,2,\ldots,$$
where $a_n$ are the Taylor coefficients of $f$ on the open unit ball $\mathbb B$.

First, from the very definition of $g$ it follows that
$$f\ast(1-g)=1+g,$$
from which as well as Proposition \ref{th:Leibniz-rule} we obtain that
$$f'\ast(1-g)=(f+1)\ast g'.$$
We now  evaluate  the preceding identity at $q=0$. Since $f(0)=1$ and $g(0)=0$, applying Proposition \ref{prop:RP} we obtain
\begin{eqnarray}\label{eq:26}
 f'(0)=2g'(0),
 \end{eqnarray}
 which together with (\ref{eq:22}) yields
\begin{eqnarray}\label{eq:27}
|a_1|=|f'(0)|\leq2.
 \end{eqnarray}

If $|a_1|=2$, i.e., $|g'(0)|=1$, it follows from Lemma \ref{th:SL-theorem} that $$g(q)=q e^{I\theta},\quad \forall \,\,q\in \mathbb B,$$
for some $I\in \mathbb S$ and some $\theta\in\mathbb R$, and hence
\begin{eqnarray}\label{eq:28}
f(q)=(1-q e^{I\theta})^{-\ast}\ast(1+qe^{I\theta}),\quad \forall \,\,q\in \mathbb B.
 \end{eqnarray}

Now we want to prove that $|a_{n_0}|\leq 2$ for any fixed  $n_0\geq 2$.
To this end, we need to
construct a regular function $\varphi$ with the same properties as $f$, whose Taylor coefficient of the first degree term is $a_{n_0}$ so that   we would conclude  that $|a_{n_0}|\leq 2$.

Starting from the Taylor expansion of  $f$, given by  $f(q)=1+\sum\limits_{n=1}^\infty q^n a_n$,
we set
\begin{equation}\label{eq:31}
\varphi(q)=1+\sum\limits_{m=1}^\infty q^{m} a_{mn_0}=1+qa_{n_0}+q^2a_{2n_0}+\cdots
\end{equation}
and
\begin{equation}\label{eq:381}
h(q)=\varphi(q^{n_0})
\end{equation}
One can see from the radii of convergence of Taylor expansions that $\varphi$ and $h$ are slice regular functions on $\mathbb B$.

Now we claim that the restriction $h_I$ of $h$ to  $\mathbb B_I$ with $I\in\mathbb S$ is exactly given by
\begin{equation}\label{eq:25-claim}  h_I(z_I)=\frac 1 {n_0} \Big(f_I(z_I)+f_I(z_I\omega_I)+\cdots +f_I(z_I\omega_I^{n_0-1})\Big),\quad \forall \,\,z_I\in \mathbb B_I,\end{equation}
where $f_I$ is the restriction of $f$ to $\mathbb B_I$
and  $\omega_I \in\mathbb C_I$ is any  quaternionic primitive $n_0$-th root of unity.
 From (\ref{eq:381}) the claim is equivalent to the identity
\begin{equation}\label{eq:259} \frac 1 {n_0} \sum_{k=0}^{n_0-1}f_I(z_I\omega_I^{k})=1+\sum\limits_{m=1}^\infty q^{mn_0} a_{mn_0}.\end{equation}
To prove this, we apply the power series expansion of $f$ and obtain
\begin{equation}\label{eq:29}
\begin{split}\frac 1 {n_0} \sum_{k=0}^{n_0-1}f_I(z_I\omega_I^{k})=
&1+\frac 1 {n_0} \Big(\sum\limits_{m=1}^\infty z_I^m a_m+\sum\limits_{m=1}^\infty z_I^m\omega_I^m a_m+\cdots +\sum\limits_{m=1}^\infty z_I^m\omega_I^{(n_0-1)m} a_m\Big)\\
=&1+\frac 1 {n_0}\sum\limits_{m=1}^\infty z_I^m\bigg(\sum\limits_{k=0}^{n_0-1}\omega_I^{km}\bigg)a_m.
\end{split}
\end{equation}

Since  $\omega_I$ is a  primitive $n$-th root of unity in the plane $\mathbb C_I$, we have
\begin{eqnarray}\label{eq:23e8}
\frac 1 {n_0}\sum\limits_{k=0}^{n_0-1}\omega_I^{km}=
\left\{
 \begin{array}{lll}
1, \qquad n_0\mid m;
\\
0,\qquad \mbox{otherwise}.
 \end{array}
  \right.
   \end{eqnarray}
Indeed,  if $n_0\mid m$, then $\omega_I^m=1$ such that each summand in (\ref{eq:23e8}) equals $1$,  so is the average. Otherwise, then
\begin{eqnarray*}
\sum\limits_{k=0}^{n_0-1}\omega_I^{km}=\frac{1-\omega_I^{n_0m}}{1-\omega_I^{m}}
=0
 \end{eqnarray*}
and (\ref{eq:23e8}) holds true. Inserting (\ref{eq:23e8}) into (\ref{eq:29}), we get (\ref{eq:259}) and this proves the claim.

We want to show that
\begin{equation}\label{eq:2re9} {\rm{Re}}(\varphi(q))>0\end{equation} for any $q\in\mathbb B.$
Let   $q\in\mathbb B$ be given and take  $u\in\mathbb B$ such that $u^{n_0}=q$. Hence due to (\ref{eq:381}) we have
 $$\varphi(q)=h(u).$$
 To prove (\ref{eq:2re9}),
  It suffices to prove that
  $${\rm{Re}}(h(u))>0, \qquad  \forall\ u\in\mathbb B.$$
Thus we only need   to show the result for the  restrictions of $h$ to any complex plane $h_I$, i.e.,
   $${\rm{Re}}(h_I(z_I))>0, \qquad \forall\ z_I\in B_I, \ \forall\ I\in \mathbb S. $$
  This follows  easily from (\ref{eq:25-claim}) and  the assumption that ${\rm{Re}}(f(q))>0$.

 Therefore, the regular function $\varphi$ we have constructed via $f$ has the same properties as $f$, whose Taylor coefficient of the first degree term is $a_{n_0}$. Consequently, we obtain that $|a_{n_0}|\leq 2$ for any $n_0\in\mathbb N$.

Finally, if there exists $n_0\in \mathbb N$ such that $|a_{n_0}|=2$, then the argument similar to the extremal case for $n=1$ implies that
$$\varphi(q)=(1-q e^{I\theta_0})^{-\ast}\ast(1+qe^{I\theta_0})
=1+2\sum\limits_{m=1}^\infty q^m e^{Im\theta_0},
\quad \forall \,\,q\in \mathbb B,$$
for some $I\in \mathbb S$ and some $\theta_I\in\mathbb R$, which together with (\ref{eq:31})  implies that
$$a_n=
\left.
\begin{array}{lll}
2\Big(\dfrac{a_{n_0}}{2}\Big)^{\frac {n}{n_0}} \qquad if \qquad n_0\mid n,
\end{array}
\right.
$$
where $a_{n_0}=2e^{In_0\theta_0}$. In particular, when $n_0=1$, this is clearly equivalent to (\ref{eq:28}). Now the proof is completed.

\end{proof}
To prove Theorem \ref{CT4}, we need a useful  lemma, which have been proved in \cite[Theorem 2.7]{GS3}. Here we provide  an alternative proof by applying the open mapping theorem.
\begin{lemma}\label{maximum modulus principle for real part}
Let $f:B(0,R)\rightarrow\mathbb H$ be a regular function. If ${\rm{Re}}f$ attains its maximum at some point $q_0$, then $f$ is constant in $B(0,R)$.
\end{lemma}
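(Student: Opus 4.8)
The plan is to reduce the maximum principle for $\mathrm{Re}\,f$ to the classical open mapping theorem for slice regular functions. First I would argue that the hypothesis localizes: since $\mathrm{Re}\,f(q_0)=M$ is the global maximum, for every $I\in\mathbb S$ the restriction $f_I:B_I\to\mathbb C_I$ is a holomorphic function (by the Splitting Lemma, writing $f_I=F+GJ$ with $F,G:B_I\to\mathbb C_I$ holomorphic) whose real part $\mathrm{Re}\,F=\mathrm{Re}\,f_I$ attains its maximum in the interior of $B_I$. The classical maximum principle for harmonic functions then forces $\mathrm{Re}\,f_I$ to be constant on the connected slice $B_I$. Doing this for the particular slice $I$ through $q_0$ already shows $\mathrm{Re}\,f\equiv M$ on $B_I$, and since $q_0$ can be taken real (the sphere $x_0+|y_0|\mathbb S$ through $q_0$ carries the same real part) this is enough to conclude $\mathrm{Re}\,f\equiv M$ wherever we look.

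Next I would invoke the open mapping theorem for slice regular functions: if $f$ is regular on a symmetric slice domain and not constant, then $f$ maps open sets to open sets (this is the standard quaternionic open mapping theorem, available in the monograph \cite{GSS} and proved via the structure of the zero set through $f^s$). Suppose for contradiction that $f$ is non-constant on $B(0,R)$. Then $f(B(0,R))$ is an open subset of $\mathbb H$. But we have just shown $\mathrm{Re}\,f\equiv M$, so $f(B(0,R))$ is contained in the affine hyperplane $\{q\in\mathbb H:\mathrm{Re}\,q=M\}$, which has empty interior in $\mathbb H$. An open nonempty set cannot be contained in a hyperplane, a contradiction. Hence $f$ must be constant.

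I expect the main obstacle to be a matter of care rather than of depth: one must make sure the open mapping theorem as quoted applies on the ball $B(0,R)$ — it does, since $B(0,R)$ is a symmetric slice domain — and one must handle the alternative in the open mapping theorem correctly (the theorem typically states that either $f$ is constant or $f$ is open). A second small point needing attention is the justification that $\mathrm{Re}\,f$ being constant on one slice propagates to a statement with nonempty interior obstruction; this is automatic once we note $f(B(0,R))\subseteq\{\mathrm{Re}=M\}$ follows from applying the harmonic maximum principle slice-by-slice, each slice containing a point of the real axis where the real part already equals $M$. With these checks in place the argument is short: \textbf{localize via the Splitting Lemma, apply the classical harmonic maximum principle on each slice to get $\mathrm{Re}\,f\equiv M$, then use the open mapping theorem to rule out non-constant $f$.}
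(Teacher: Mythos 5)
Your proof is correct and takes essentially the same route as the paper's: the classical maximum principle for the harmonic function $\mathrm{Re}\,f_{I_0}$ on the slice through $q_0$, propagation to all slices via the real points of $B_{I_0}$ (which lie in every slice and where the maximum is now attained), and then the open mapping theorem to contradict the image lying in the hyperplane $\{\mathrm{Re}\,q=M\}$. One small caution: your parenthetical claim that the sphere $x_0+|y_0|\mathbb S$ through $q_0$ "carries the same real part" is false in general for slice regular functions (e.g.\ $f(q)=qi$ has nonconstant real part on spheres), but this aside is not load-bearing since you also give the correct propagation argument through the real axis.
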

\begin{proof}
 We argue by contradiction, and suppose that $f$ is not constant while ${\rm{Re}}f$ attains its maximum at some point $q_0=x_0+y_0I_0$. The real part  ${\rm{Re}}(f(x+yI_0))$ of the restriction $f_{I_0}$ of $f$ to $B_{I_0}(0,R)$ is a harmonic function of two variables $x$, $y$. Thus ${\rm{Re}}f_{I_0}$ is constant on $B_{I_0}(0,R)$. In particular, ${\rm{Re}}(f_{I_0})$ attains its maximum at the point $x_0\in (-R, R)$, which belongs to $B_I(0,R)$ for all $I\in \mathbb S$. As a consequence, ${\rm{Re}}f$ is constant on $B_I(0,R)$ for all $I\in \mathbb S$ and hence on $B(0,R)$. On the contrary, from our assumption that $f$ is not constant it follows that $f$ is not constant either, in view of the open mapping theorem. The contradiction concludes the proof.
\end{proof}
\begin{rem}
The preceding lemma can also be  proved  by using the splitting lemma for slice regular functions and the maximum modulus principle for harmonic functions.
\end{rem}

\begin{proof}[Proof of Theorem $\ref{CT4}$]
The proof of inequality (\ref{115}) is similar to the one given in \cite{CGS1}, but simpler.
The result is obvious if $f$ is constant. Otherwise, its real part ${\rm{Re}}f$ is not constant either, in view of the open mapping theorem, then it follows from Lemma \ref{maximum modulus principle for real part} that
$${\rm{Re}}f(q)<A,\qquad \forall \,\,q\in \mathbb B.$$
First, we assume that $f(0)=0$. Consider the function
$$g(q)=\big(2A-f(q)\big)^{-\ast}\ast f(q)=f(q)\ast\big(2A-f(q)\big)^{-\ast},$$
which is regular on $\mathbb B$ and $|g(q)|<1$. The last assertion follows from the facts that $g=\big(2A-f\circ T_{2A-f}\big)^{-1}f\circ T_{2A-f}$ and ${\rm{Re}}f<A$. Since $g(0)=0$, it follows from Theorem \ref{th:SL-theorem} that
\begin{eqnarray}\label{eq:31}
|g(q)|\leq|q|, \qquad \forall\ q\in\mathbb B,
\end{eqnarray}
and
\begin{eqnarray}\label{eq:32}
|g'(0)|\leq1.
\end{eqnarray}

From the very definition of $g$ and Proposition \ref{prop:Quotient Relation} it follows that
$$f(q)=2Ag(q)\ast(g(q)-1)^{-\ast}=2A(g(q)-1)^{-\ast}\ast g(q)=2A\big(g\circ T_{g-1}(q)-1\big)^{-1}g\circ T_{g-1}(q),$$
and hence
\begin{equation}\label{eq:33}
|f(q)|\leq2A\frac{|T_{g-1}(q)|}{1-|T_{g-1}(q)|}=2A\frac{|q|}{1-|q|}, \qquad \forall\ q\in\mathbb B.
\end{equation}
Moreover,
\begin{equation}\label{156}
\begin{split}
   {\rm{Re}}f(q)&=2A\frac{|g\circ T_{g-1}(q)|^2-{\rm{Re}}\big(g\circ T_{g-1}(q)\big)}{|1-g\circ T_{g-1}(q)|^2} \\
   &=A\bigg(1-\frac{1-|g\circ T_{g-1}(q)|^2}{1+|g\circ T_{g-1}(q)|^2-2{\rm{Re}}\big(g\circ T_{g-1}(q)\big)}\bigg)\\
   &\leq A\bigg(1-\frac{1-|g\circ T_{g-1}(q)|^2}{(1+|g\circ T_{g-1}(q)|)^2}\bigg)\\
   &=2A\frac{|g\circ T_{g-1}(q)|}{1+|g\circ T_{g-1}(q)|}\\
   &=2A\bigg(1-\frac{1}{1+|g\circ T_{g-1}(q)|}\bigg)\\
   &=2A\bigg(1-\frac{1}{1+|q|}\bigg)\\
   &=2A\frac{|q|}{1+|q|},
\end{split}
\end{equation}
since $|g\circ T_{g-1}(q)|\leq|T_{g-1}(q)|=|q|$, which follows from $(\ref{eq:31})$.

Again from the very definition of $g$, we obtain that
 $$(2A-f)\ast g=f,$$
 which together with Proposition \ref{th:Leibniz-rule} implies that
$$f'\ast(1+g)=(2A-f)\ast g'.$$
We now  evaluate  the preceding identity at $q=0$. Since $f(0)=0$ and $g(0)=0$, applying Proposition \ref{prop:RP} we obtain
\begin{eqnarray}\label{eq:34}
 f'(0)=2Ag'(0),
 \end{eqnarray}
 which together with (\ref{eq:32}) yields
\begin{eqnarray}\label{eq:35}
|f'(0)|\leq2A.
 \end{eqnarray}

For general case, we consider the function $f-f(0)$, replacing $f$, $A$ by $f-f(0)$ and $A-{\rm{Re}}f(0)$ in inequalities (\ref{eq:33}), (\ref{156}) and(\ref{eq:35}) respectively, yields that
\begin{equation}\label{eq:36}
|f(q)-f(0)|\leq2(A-{\rm{Re}}f(0))\frac{|q|}{1-|q|}, \qquad \forall\ q\in\mathbb B,
\end{equation}
\begin{equation}\label{157}
 {\rm{Re}}f(q)\leq {\rm{Re}}f(0)+\frac{2|q|}{1+|q|}\big(A-{\rm{Re}}f(0)\big)
 \leq \frac{2|q|}{1+|q|}A+\frac{1-|q|}{1+|q|}{\rm{Re}}f(0), \quad \forall\ q\in\mathbb B,
\end{equation}
and
\begin{eqnarray}\label{eq:37}
|a_1|=|f'(0)|=|\big(f-f(0)\big)'(0)|\leq2\big(A-{\rm{Re}}f(0)\big).
 \end{eqnarray}
Now inequality (\ref{115}) follows from the maximum modulus principle and (\ref{eq:36}). Similarly, inequality (\ref{155}) follows from Lemma \ref{maximum modulus principle for real part} and (\ref{157}).

Next, we want to prove that $|a_{n_0}|\leq 2(A-{\rm{Re}}f(0))$ for any fixed  $n_0\geq 2$. The proof is similar to the one given in Theorem \ref{CT3}.
To this end, we need to
construct a regular function $\varphi$ with the same properties as $f$, whose Taylor coefficient of the first degree term is $a_{n_0}$ so that   we would conclude  that $|a_{n_0}|\leq 2(A-{\rm{Re}}f(0))$.

Starting from the Taylor expansion of  $f$, given by  $f(q)=\sum\limits_{n=0}^\infty q^n a_n$,
we set
\begin{equation}\label{eq:38}
\varphi(q)=\sum\limits_{m=0}^\infty q^{m} a_{mn_0}=a_0+qa_{n_0}+q^2a_{2n_0}+\cdots,
\end{equation}
and
\begin{equation}\label{eq:39}
h(q)=\varphi(q^{n_0}).
\end{equation}
One can see from the radii of convergence of Taylor expansions that $\varphi$ and $h$ are slice regular functions on $\mathbb B$, respectively.

Reasoning as in the proof of Theorem \ref{CT3} gives that the restriction $h_I$ of $h$ to  $\mathbb B_I$ with $I\in\mathbb S$ is exactly given by
\begin{equation}\label{eq:40}
h_I(z_I)=\frac 1 {n_0} \Big(f_I(z_I)+f_I(z_I\omega_I)+\cdots +f_I(z_I\omega_I^{n_0-1})\Big),\quad \forall \,\,z_I\in \mathbb B_I,
\end{equation}
where $f_I$ is the restriction of $f$ to $\mathbb B_I$
and  $\omega_I\in \mathbb C_I$ is any  quaternionic primitive $n_0$-th root of unity.
It follows from (\ref{eq:40}) that
$$A_h:=\sup\limits_{q\in\mathbb B}{\rm{Re}}(h(q))\leq A_f=A,$$
and hence
$$A_\varphi:=\sup\limits_{q\in\mathbb B}{\rm{Re}}(\varphi(q))\leq A_f=A$$ by the relation (\ref{eq:39}).
Moreover, we have $\varphi(0)=a_0=f(0)$.

As a result, the regular function $\varphi$  constructed via $f$ has the same properties as $f$, whose Taylor coefficient of the first degree term is $a_{n_0}$. Consequently, we obtain that $|a_{n_0}|=|\varphi'(0)|\leq 2(A_\varphi-{\rm{Re}}\varphi(0))\leq 2(A-{\rm{Re}}f(0))$ for any $n_0\in\mathbb N$.

Finally, we come to prove inequality (\ref{116}). It follows from the Taylor expansion of $f$ that $$f^{(n)}(q)=\sum\limits_{m=n}^{\infty}m(m-1)\cdots (m-n+1)q^{m-n} a_m,$$
and hence by (\ref{114}),
\begin{equation}\label{41}
\begin{split}
  |f^{(n)}(q)|&\leq\sum\limits_{m=n}^{\infty}m(m-1)\cdots (m-n+1)|a_m||q|^{m-n} \\
    &\leq 2(A-{\rm{Re}}f(0))\sum\limits_{m=n}^{\infty}m(m-1)\cdots (m-n+1)r^{m-n}\\
    &=2(A-{\rm{Re}}f(0))\left.\bigg(\frac{d^n}{d t^n}\sum\limits_{m=0}^{\infty}t^m\bigg)\right|_{t=r}\\
    &=\frac{2n!}{(1-r)^{n+1}}\big(A-{\rm{Re}}f(0)\big)
\end{split}
\end{equation}
for all $|q|\leq r<1$ and $n\in\mathbb N$. Incidentally, inequality (\ref{115}) can also proved in the same manner as above. Now the proof is completed.
\end{proof}

Finally we come to show the equivalence of Theorems \ref{CT3} and  \ref{CT4}.
\begin{thm}
Theorems $\ref{CT3}$ and $\ref{CT4}$ are equivalent.
\end{thm}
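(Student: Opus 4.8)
The plan is to prove the two implications separately, showing that each of Theorems \ref{CT3} and \ref{CT4} follows from the other by a suitable change of function.

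First I would derive Theorem \ref{CT4} from Theorem \ref{CT3}. Let $f(q)=\sum_{n=0}^\infty q^n a_n$ be regular on $\mathbb B$ with $A=\sup_{q\in\mathbb B}{\rm Re}f(q)<+\infty$. If $f$ is constant, every inequality in Theorem \ref{CT4} is trivial, so assume $f$ is non-constant; by Lemma \ref{maximum modulus principle for real part} we then have ${\rm Re}f(q)<A$ throughout $\mathbb B$, in particular $A-{\rm Re}f(0)>0$. Consider
$$F(q)=1+\frac{f(q)-f(0)}{A-{\rm Re}f(0)}.$$
This is regular on $\mathbb B$, satisfies $F(0)=1$, and its Taylor coefficients of positive degree are $a_n/(A-{\rm Re}f(0))$ for $n\geq 1$. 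Since ${\rm Re}f(q)<A$, we get ${\rm Re}F(q)=1+\frac{{\rm Re}f(q)-{\rm Re}f(0)}{A-{\rm Re}f(0)}>0$, so Theorem \ref{CT3} applies to $F$. The coefficient bound $|a_n/(A-{\rm Re}f(0))|\leq 2$ gives \eqref{114}, and then \eqref{116} follows by differentiating the Taylor series and summing, exactly as in \eqref{41} above. Inequality \eqref{eq:11} for $F$ reads $\frac{1-|q|}{1+|q|}\leq|F(q)|\leq\frac{1+|q|}{1-|q|}$; the right-hand estimate gives $|F(q)-1|\leq\frac{1+|q|}{1-|q|}-1=\frac{2|q|}{1-|q|}$, which is precisely \eqref{115} after multiplying by $A-{\rm Re}f(0)$, and combined with the maximum modulus principle one gets it for $|q|\leq r$. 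Finally the lower bound ${\rm Re}F(q)\geq\frac{1-|q|}{1+|q|}$ is equivalent, after rearranging, to ${\rm Re}f(q)\leq\frac{2|q|}{1+|q|}A+\frac{1-|q|}{1+|q|}{\rm Re}f(0)$, which is \eqref{155}; again Lemma \ref{maximum modulus principle for real part} upgrades this to the stated form for $|q|\leq r<1$.

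Conversely I would derive Theorem \ref{CT3} from Theorem \ref{CT4}. Let $f(q)=1+\sum_{n=1}^\infty q^n a_n$ be regular on $\mathbb B$ with ${\rm Re}f(q)>0$. Here the subtlety is that the hypothesis of Theorem \ref{CT4} requires $\sup_{q\in\mathbb B}{\rm Re}f(q)<+\infty$, which need not hold for $f$ itself. To circumvent this, fix $0<\rho<1$ and apply Theorem \ref{CT4} to the rescaled function $f_\rho(q):=f(\rho q)$ on $\mathbb B$: since $\overline{\rho\mathbb B}\subset\mathbb B$, the real part of $f_\rho$ is continuous on $\overline{\mathbb B}$, hence bounded, so $A_\rho:=\sup_{q\in\mathbb B}{\rm Re}f_\rho(q)<+\infty$. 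The Taylor coefficients of $f_\rho$ are $\rho^n a_n$ and ${\rm Re}f_\rho(0)={\rm Re}f(0)=1$, so \eqref{114} gives $|\rho^n a_n|\leq 2(A_\rho-1)$. Letting $\rho\to 1^-$ and using $A_\rho\to\sup_{q\in\mathbb B}{\rm Re}f(q)$ — which may be $+\infty$ — does not immediately give $|a_n|\leq 2$; the cleaner route is to observe that ${\rm Re}f(q)>0$ forces, via \eqref{155} applied to $f_\rho$ after centering, a uniform control, or better, to directly use \eqref{155}: for $f_\rho$ one has ${\rm Re}f_\rho(q)\leq\frac{2|q|}{1+|q|}A_\rho+\frac{1-|q|}{1+|q|}$; running this with the known value ${\rm Re}f_\rho(0)=1$ does not pin down $A_\rho$. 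A more robust argument: the function $g(q)=(f(q)-1)\ast(f(q)+1)^{-\ast}$ maps $\mathbb B$ to $\mathbb B$ with $g(0)=0$ (as shown in the proof of Theorem \ref{CT3}), and from $f\ast(1-g)=1+g$ one reads off that the Taylor coefficients of $f$ are determined by those of $g$; applying the Schwarz Lemma \ref{th:SL-theorem} to $g$, or equivalently applying Theorem \ref{CT4} to $-g$ (which has $\sup{\rm Re}(-g)\leq 1$ since $|g|<1$) and translating back through the $\ast$-product identity, yields $|a_n|\leq 2$ and the extremal characterizations. The inequalities \eqref{eq:11} follow from \eqref{115}, \eqref{155} applied to $g$ or to $f$ itself once boundedness is in hand on each $\rho\mathbb B$ and $\rho\to 1$.

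The main obstacle is exactly this mismatch of hypotheses in the direction \ref{CT4}$\Rightarrow$\ref{CT3}: Theorem \ref{CT3} only assumes ${\rm Re}f>0$, not a finite supremum of ${\rm Re}f$, so one cannot plug $f$ directly into Theorem \ref{CT4}. The resolution is to pass through the auxiliary function $g=(f-1)\ast(f+1)^{-\ast}$, which always has bounded (indeed, strictly less than $1$ in modulus) real part, apply Theorem \ref{CT4} (or its Schwarz-lemma special case) to $g$, and then transfer the conclusions back to $f$ using the identity $f\ast(1-g)=1+g$ and the Leibniz rule, precisely as was done inside the proof of Theorem \ref{CT3}. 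In the opposite direction \ref{CT3}$\Rightarrow$\ref{CT4} there is no such difficulty: the normalization $F=1+(f-f(0))/(A-{\rm Re}f(0))$ is well-defined because Lemma \ref{maximum modulus principle for real part} guarantees $A-{\rm Re}f(0)>0$ in the non-trivial case, and all four conclusions of Theorem \ref{CT4} are straightforward rearrangements of the conclusions of Theorem \ref{CT3} for $F$, together with the termwise differentiation already carried out in \eqref{41}.
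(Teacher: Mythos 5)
There is a genuine gap in both directions. In the direction Theorem \ref{CT3} $\Rightarrow$ Theorem \ref{CT4}, your normalization $F(q)=1+\frac{f(q)-f(0)}{A-{\rm Re}f(0)}$ does \emph{not} satisfy ${\rm Re}F>0$: from ${\rm Re}f(q)<A$ you only get $\frac{{\rm Re}f(q)-{\rm Re}f(0)}{A-{\rm Re}f(0)}<1$, i.e.\ ${\rm Re}F<2$, which is the wrong side. Since the hypothesis gives no lower bound on ${\rm Re}f$ (it may be $-\infty$), ${\rm Re}F$ can be negative (take $f=2-F_0$ with $F_0$ of unbounded positive real part), so Theorem \ref{CT3} cannot be applied to $F$. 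The auxiliary function must be built from $A-f$, e.g.\ $g(q)=\frac{(A+{\rm Re}f(0))-(f(q)+\overline{f(0)})}{A-{\rm Re}f(0)}$, which has $g(0)=1$ and ${\rm Re}g=\frac{A-{\rm Re}f}{A-{\rm Re}f(0)}>0$; the coefficient bound and the lower bound ${\rm Re}g\geq\frac{1-|q|}{1+|q|}$ then yield \eqref{114} and \eqref{155}. A second, independent error in this direction: $|F(q)-1|\leq\frac{1+|q|}{1-|q|}-1$ is not a consequence of $|F(q)|\leq\frac{1+|q|}{1-|q|}$ (the triangle inequality gives $|F-1|\leq|F|+1$, not $|F|-1$); \eqref{115} should instead be obtained by summing the coefficient bounds \eqref{114} term by term, as in \eqref{41}.

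In the direction Theorem \ref{CT4} $\Rightarrow$ Theorem \ref{CT3}, you correctly identify the mismatch of hypotheses, but your resolution abandons the reduction: invoking $g=(f-1)\ast(f+1)^{-\ast}$ and the Schwarz lemma simply re-proves Theorem \ref{CT3} from scratch rather than deducing it from Theorem \ref{CT4}, and the proposed ``translation back through $f\ast(1-g)=1+g$'' only controls $a_1$; it does not give $|a_n|\leq 2$ for $n\geq 2$ (in the paper that required the separate finite-averaging construction). The observation you are missing is much simpler: apply Theorem \ref{CT4} to $-f$. The positivity hypothesis ${\rm Re}f>0$ is exactly what makes $\sup_{q\in\mathbb B}{\rm Re}(-f)(q)\leq 0<+\infty$, so the hypotheses of Theorem \ref{CT4} are met with $A\leq 0$ and ${\rm Re}(-f)(0)=-1$; then \eqref{114} gives $|a_n|\leq 2$, \eqref{115} gives $|f(q)-1|\leq\frac{2|q|}{1-|q|}$ hence the upper bound in \eqref{eq:11}, and \eqref{155} gives ${\rm Re}f(q)\geq\frac{1-|q|}{1+|q|}$. (As in the paper, the equality characterizations of Theorem \ref{CT3} are not part of this reduction.) Your dilation attempt $f_\rho$ cannot be repaired, since $A_\rho\to+\infty$ is possible.
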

\begin{proof}
Set $A=0$ and consider the function $-f$, then $(\ref{eq:11})$ and $(\ref{eq:11an})$ follow from Theorem \ref{CT4}.

Now we deduce Theorem \ref{CT4} from Theorem \ref{CT3}. Let $f$ be described in Theorem \ref{CT4} and be not constant, then
$${\rm{Re}}f(q)<A,\qquad \forall \,\,q\in \mathbb B.$$
Consider the function $$g(q)=\frac{\big(A+{\rm{Re}}f(0)\big)-\big(f(q)+\overline{f(0)}\big)}{A-{\rm{Re}}f(0)},\qquad \forall \,\,q\in \mathbb B.$$
Then $g$ is regular on $\mathbb B$, $g(0)=1$ and $${\rm{Re}}g(q)=\frac{A-{\rm{Re}}f(q)}{A-{\rm{Re}}f(0)}>0,$$
i.e. $g$ satisfies the assumptions given in Theorem \ref{CT3}.
Therefore,
$$|a_n|=\bigg|\frac{f^{(n)}(0)}{n!}\bigg|
=\bigg|\frac{g^{(n)}(0)}{n!}\bigg|\big(A-{\rm{Re}}f(0)\big)\leq2\big(A-{\rm{Re}}f(0)\big),$$
and
$$\frac{A-{\rm{Re}}f(q)}{A-{\rm{Re}}f(0)}={\rm{Re}}g(q)>\frac{1-|q|}{1+|q|},$$
from which it follows that
$${\rm{Re}}f(q)\leq \frac{2|q|}{1+|q|}A+\frac{1-|q|}{1+|q|}{\rm{Re}}f(0), \quad \forall\ q\in\mathbb B.$$
Now inequality (\ref{155}) follows from Lemma \ref{maximum modulus principle for real part}. Inequalities (\ref{115}) and (\ref{116}) follow from  (\ref{114}) as in the proof of Theorem \ref{CT4}. This completes the proof.
\end{proof}

\bibliographystyle{amsplain}

\end{document}